\newcommand{\diff}{\mathrm{d}}
\newcommand{\hatl}{\hat{\lambda}}
\newcommand{\iidsim}{\overset{iid}\sim}
\newcommand{\indsim}{\overset{ind}\sim}
\newcommand{\numbCondMMLE}{{B.1}}
\newcommand{\numbCondPriorBound}{{B.2}}
\newcommand{\numbCondKL}{{B.3}}
\definecolor{navy}{rgb}{0,0,0.502}
\definecolor{brown}{rgb}{0.59, 0.29, 0.0}
\newcommand{\Norm}{\mathcal{N}}
\theoremstyle{definition}
\newtheorem{defi}{Definition}[section]
\theoremstyle{plain}
\newtheorem{theo}{Theorem}[section]
\newtheorem{prop}[theo]{Proposition}
\newtheorem{cor}[theo]{Corollary}
\theoremstyle{definition} 
\newtheorem{example}{Example}[section]
\newtheorem{ex}[example]{Example}
\theoremstyle{remark}
\newtheorem{rem}[defi]{Remark}
\newlist{inparaenum}{enumerate}{2}
\setlist[inparaenum,1]{label=(\alph*)}
\setlist[inparaenum,2]{label=(\roman{inparaenumi}\emph{\alph*})}
\begin{document}

\title{Empirical Bayes 	in Bayesian learning: understanding a common practice}
\author{Stefano Rizzelli\\ \scalebox{0.9}{Department of Statistical Sciences, University of Padova} \\ 
	\\
	Judith Rousseau \\
	\scalebox{0.9}{CEREMADE, Universit\'e Paris Dauphine}\\
	\scalebox{0.9}{and Department of Statistics, University of Oxford} \\ 
	\\
Sonia 	Petrone \\
\scalebox{0.9}{Department of Decision Sciences, Bocconi University}}

\maketitle

\begin{abstract}
In applications of Bayesian procedures, {once a class of priors has been chosen, it may be tempting to fix the prior's hyperparameters from the data,} in an empirical Bayes (EB) fashion, usually by their maximum {marginal} likelihood estimates (MMLE).  This is a quite common but questionable practice, lacking a rigorous theoretical basis. We provide a theoretical framework where this form of EB is regarded as a computational strategy for approximating a genuine Bayesian posterior distribution and prove its general properties for parametric models. While computing the MMLE may still be demanding, we prove novel results that allow us to provide a simple proxy. These results establish the limit behavior of the MMLE in quite general settings, including both identifiable and non-identifiable models - specifically, overfitted mixture models - significantly filling a gap in the literature. Moreover, we study higher order merging, showing that, when not degenerate, the EB posterior approximates at a faster rate an oracle-Bayes posterior distribution based on the prior law that, within the given class of priors, expresses the most information on the true model's parameters. This is a faster approximation than classic Bernstein-von Mises results.  Our work provides formal content to common beliefs on this popular practice. 
\end{abstract}

\noindent%

\footnotesize{Keywords: Maximum marginal likelihood estimation, Kullback-Leibler divergence, Frequentist strong merging, Higher order expansion, Sparse linear regression, Finite mixture models.}

\section{Introduction}
\label{sec:intro}
In applications of Bayesian procedures, even when the form of the prior is carefully specified, it may be delicate to fix the prior hyperparameters, so that, to bypass additional computational issues or for other reasons, it is tempting to fix them from the data, typically by their maximum marginal likelihood estimates (MMLE), obtaining a so-called \lq \lq empirical Bayes posterior distribution''. This approach is quite common but engenders controversy, and Bayesian motivations are admittedly nonrigorous; see e.g. \cite{bergerBerliner1986}, who also refer to it, from \cite{good1966}, as type II MLE. Moreover, while usually referred to as \lq \lq empirical Bayes'' (EB), this problem deviates from the classic EB setting as traced by \cite{robbins1956}. Here, the setting is Bayesian; given the model  $Y \mid \theta \sim p_\theta^{(n)}(\cdot)$, where $n$ denotes information in the data $Y$, like a sample size, one has chosen a class of prior distributions on $\theta$, typically in a parametric density form $\pi_\lambda$, $\lambda \in \Lambda$ and, in principle, would also elicit $\lambda$ based on prior information, e.g. from experts and domain knowledge. Yet, it is not uncommon to choose it empirically, which gives the empirical Bayes flavor. However, unlike the classic EB setting, there is no frequentist randomness of the model parameters across parallel experiments, thus no true latent distribution or, in other words, no true prior.   The prior density $\pi_\lambda$ formalizes the subject's information on the constant, although unknown, parameter $\theta$ - ultimately, on the data. As we will argue, the matter is comparing the information conveyed by different priors. To clean up confusion, we refer to this setting as Empirical Bayes in Bayes (EBIB) to distinguish it from the classic EB  approach \citep[e.g.,][]{efron19}, which assume parameters' variability across parallel experiments with a true hidden density.  Despite being a quite common practice, EBIB remains controversial,  and properties seem mostly given on a case-by-case basis. The aim of this paper is to dig further for methodological clarity and provide general theoretical properties. 

Data-dependent priors are used constantly by practitioners, either formally or informally. Among the various ways of choosing $\lambda$ from the data, the MMLE $\hat \lambda_n$ is often considered as a (more) principled approach, see for instance Chapter 8 in \cite{Robert94}. Indeed, it can be viewed as a comparison a posteriori of different priors - here, different choices of $\lambda$ - based on the resulting predictive performance on the data. When data are observed sequentially, i.e. $Y=(Y_1, \ldots, Y_n)$, a popular measure of predictive performance is the overall log-predictive score, $\sum_{i=1}^n \log q_\lambda(y_i \mid y_{1:i-1})=\log m_{\lambda}^{(n)}(y)$, where $q_\lambda(y_i \mid y_{1:i-1})$ is the predictive density of $Y_i$ given $Y_{1:i-1}= (y_1, \ldots, y_{i-1})$ computed at $y_i$. Hence maximizing the log predictive score is equivalent to maximizing the marginal likelihood $m_{\lambda}^{(n)}(y)$ (see e.g. \citealp{fong20}).  

In this paper we are not advocating the use of EBIB approaches in comparison, in particular, to a more genuine Bayesian approach that would express a hyperprior $H$ on $\lambda$ and build a hierarchical prior $\pi(\theta)= \int \pi_\lambda(\theta) \diff H(\lambda)$. The latter means to use a different prior, which can ultimately convey more structured information on the data. Yet, we do not deal, here, with the more fundamental problem of proposing the class of priors, which is the core of wide Bayesian literature.  Rather, we aim at better understanding the implications of EBIB procedures, since they are so commonly used in practice.  In fact, even when a hyperprior $H$ is used, it typically depends on some hyperparameter, which itself needs to be elicited and can thus be fixed in a data-dependent fashion. Our point  is that the EB posterior distribution could be motivated as an approximation, possibly computationally convenient, of a genuine Bayesian posterior. However, whether such an approximation holds is only partially known, and what Bayesian procedure EBIB would specifically approximate is an entirely open question. The common sense is that  EBIB gives a \lq \lq better'' choice of $\lambda$ than a possibly poor elicitation, that could be used, for example, as a default choice in softwares for Bayesian computations.  This would be a relevant practical use of EBIB, yet in what sense, if any, would this choice be \lq\lq better''?

Results by \cite{p+r+s2014} show that, if the true data generating process is $p_{\theta_0}^{(n)}$ and if $\sup_\lambda \pi_\lambda(\theta_0)< \infty$, then EBIB aims at tracking a {\em prior oracle} choice of $\lambda$, i.e. $\lambda^*({\theta_0}) = \arg\max_\lambda \pi_\lambda(\theta_0)$;  however, the important case of $\sup_\lambda \pi_\lambda(\theta_0)=+\infty$, which is typical of shrinkage phenomena, is not yet understood. Moreover, even in the regular case, i.e. $\sup_\lambda \pi_\lambda(\theta_0)< \infty$,  the EB posterior, too, verifies the Bernstein-von Mises property, and in fact merges with {\em any} Bayes posterior from a smooth prior, thus one cannot disentangle any \lq\lq better'' behavior to first order, calling for finer (higher-order asymptotic) comparisons. 
 
The \textit{irregular} (or degenerate) case, i.e. $\sup_\lambda \pi_\lambda(\theta_0)= \infty$, which is not studied in the literature, covers a wide range of situations for which the behaviour of the MMLE is a priori unclear. To illustrate this, consider the following simple but important example of Automatic Relevance Determination, as described for instance in \cite{Tipping2001}  in the context of relevance vector machines.   Consider the Bayesian regression  model $ {Y \mid \beta, \sigma^2} \sim \mathcal{N}_n(X \beta, \sigma^2 I_n)$, where $\beta=(\beta_1, \ldots, \beta_p)^T$ and, for short here, $\sigma^2$ is known, while the $\beta_j$ are given independent Gaussian priors,  $\beta_j \mid \tau_j \indsim \mathcal{N}(0, \tau_j)$. To speed up computations, \cite{Tipping2001} suggests to approximate a hierarchical approach where $\tau_j \stackrel{iid}{\sim} \mathcal {IG}(a,b)$ by setting the $\tau_j$ at their MMLEs $\hat{\tau}_j$, in an EB fashion, also motivating it in the sparse setup, i.e. when the true $\beta_{0,j}$ is zero for some $j$, by noting that in this case the resulting EB posterior is sharply peaked at zero (see also \citealp{Wipf2007} and  \citealp{Rudy2021}). Now, being the prior $\pi_{\lambda}=\prod_{j=1}^p \pi_{\tau_j}(\beta_j)$ (independent Gaussian densities, with $\lambda=(\tau_1, \cdots, \tau_p)$), as soon as a true $\beta_{0,j}=0$  one has   $\sup_\lambda \pi_\lambda(\beta)= \infty$, which is attained for any vector $\lambda$ 
such that $\tau_j=0$. What is the asymptotic behaviour of $\hatl_n$ in this case? If $p=1$ then $\sup_\lambda \prod_{j}\pi_{j}(\beta_{0,j}|\tau_j) = +\infty$ is attained at the singleton $ \tau_1 = 0$ leading to the point mass prior $\delta_{(0)}$, but, to our knowledge, it is not proved that $\hatl_n$ converges to 0 even in this most simple case. We answer this question in a general framework in Section \ref{sec:gene:identifiable}.  Moreover, although the EBIB posterior is computationally attractive, maximizing the marginal likelihood still requires (lengthy) numerical optimization.  Interestingly, our theoretical results of  Section \ref{sec:main-limit-results} suggest a simple proxy of the MMLE, which is here available in analytic closed form, namely $\hat{\beta}_{\text{OLS}}^2$. Table  \ref{fig:tableMMLE_mixreg} displays the maximum absolute approximation errors of the proxy-MMLEs $\hat{\tau}_{n,j}^{\text{proxy}}$ with respect to the MMLEs $\hat{\tau}_{n,j}$ for a sample of varying size $n$ (Table 1 in Section D.1.2 of the Supplement reports individual values). We return on this is Section \ref{sec:proxy}.  

\begin{table}[h]
\centering
\begin{tabular}{c ccc}
\hline
 & {\small$n=40$} & {\small$n=80$} & {\small$n=150$} \\
\hline
{\small$\max_{1 \leq j \leq 15} |\hat{\tau}_j - \hat{\tau}_j^{proxy}|$} & {\small0.036} & {\small0.012} & {\small0.007} \\
\hline
\end{tabular}
\caption{{\small Maximum absolute differences between MMLE $\hat{\lambda}_n{=(\hat{\tau}_1, \ldots, \hat{\tau}_{15})}$ 
and proxy-MMLE  $\hatl_{n}^{\text{proxy}}{=(\hat{\tau}_1^{\text{proxy}}, \ldots, \hat{\tau}_{15}^{\text{proxy}})}$.
The true regression parameters  are $\beta_{0,1}=\beta_{0,2}=1$, $\beta_{0,j}=0$, for $j=3, \ldots, 15$, while {$\sigma^2=1$ is known}.}}
\label{fig:tableMMLE_mixreg}
\end{table}
Continuing on the above example, a popular Bayesian alternative to the EB approach described above is to use continuous shrinkage priors, which have a hierarchical construction where  $ \tau_j^2 \iidsim h_\lambda$, $ j=1, \ldots,p$. Although the approach is now fully Bayesian, the hyperprior $h_\lambda$ again depends on hyperparameters $\lambda$ that need to be chosen; for instance using their  MMLE, as in the Bayesian LASSO \citep{parkCasella2008}. In this case, unless $\beta_{0,j}=0$ for all $j\leq p$, $\sup_\lambda \pi_\lambda (\beta_0)< \infty$ and using \cite{p+r+s2014}, the EB posterior is asymptotically Gaussian and equivalent to {\em any} posterior distribution associated to a smooth prior. However, for finite $n$ the resulting posterior distributions can be quite different, and it is of importance to understand which of them the EB posterior is specifically targeting. We answer this question in Section \ref{sec:merging}, in the general regular setup.

Finally, the above examples require the model to be identifiable, i.e. the true parameter $\theta_0$ to be uniquely defined. What is the behaviour of EBIB when the model is not identifiable is an open question. To address this question we consider mixture models, which are ubiquitous and widely used for Bayesian model-based clustering: 
\begin{equation}\label{model:mixt}
f_\theta(y) = \sum_{j=1}^K \, p_j \, g_{\gamma_j}(y), \quad \theta = (p_1, \cdots, p_K, \gamma_1, \cdots, \gamma_K), \quad \gamma_j \in \Gamma \subset \mathbb R^{d_0}.
\end{equation}
A typical prior on the mixture weights {$(p_1, \ldots, p_K)$} is the Dirichlet distribution  $\mathcal{D}(\lambda, \cdots, \lambda)$, $\lambda>0$.  The hyperparameter $\lambda$ plays a crucial role in governing sparsity of the mixture weights' configuration and the cluster allocation; see, e.g., \cite{r+m2011},  who for overfitted mixtures showed a phase-transition behaviour at $\lambda = d_0/2$, where $d_0$ is the dimension of $\gamma_j$, which suggests to choose $\lambda < d_0/2$.  Nevertheless, choosing $\lambda$ within the range $(0,d_0/2)$ remains challenging. 
We show in Section \ref{sec:mixtures} that we can adapt our results of Section \ref{sec:gene:identifiable} to this case, by noting that the relevant prior (playing the role of $\pi_\lambda$ in the setting of Section \ref{sec:gene:identifiable}) is the prior law induced by the Dirichlet distribution on the unknown clustering of the data,  and relate the asymptotic behaviour of $\hatl_n$ to the maximizer of $\pi_{\lambda, H_n} (K_0)$, where $\pi_{\lambda, H_n}$ is the marginal prior of the random number of clusters $H_n$ in $n$ observations drawn from $f_{0}$, with $K_0$ the true number of clusters in the population.

\subsection{Contributions and structure of the paper} \label{sec:contributions-new}
In this paper we  address the above questions in a general context from a frequentist perspective, i.e. assuming that there is a true probability law $P_{\theta_0}^{(n)}$. We first describe the asymptotic behaviour of  EBIB, from which we propose a proxy of the MMLE that is typically easy to compute.  We then look at asymptotic agreement of the EB and Bayesian posterior distributions in terms of strong frequentist merging \citep{ghoshRamamoorthi2003}.  To our knowledge, the first results on   generic asymptotic behaviour of the MMLE  and {merging properties of the EB posterior} were given by \cite{p+r+s2014} who were mostly concerned with regular parametric models $P_\theta^{(n)}$, with $\sup_\lambda \pi_\lambda(\theta_0)<\infty$.  Extensions to nonparametric models have been obtained in \cite{j+sz2017,zhang:gao20}, in the context where $\lambda$ is finite dimensional and drives the posterior contraction rates. In this paper we treat parametric models, possibly irregular (i.e. $\pi_\lambda(\theta_0)\in (0, +\infty]$) and possibly with increasing dimension, for which the \textit{signal} on $\lambda$ in $m_\lambda^{(n)}$ is not as strong (the posterior contraction rates remain the same for all $\lambda$).  

The limit behavior of the MMLE is studied in Section \ref{sec:MLE}. We prove that, under standard assumptions, {\em for any} $\theta_0$ the MMLE $\hatl_n$ converges to the {\em prior-oracle value} $\lambda^*(\theta_0)$ that maximizes the prior density at the true $\theta_0$. Remarkably, our limit results (Theorems \ref{theo:MMLE} and \ref{th:mixtures}) hold in a large variety of settings, not covered by \cite{p+r+s2014}, allowing for multidimensional hyperparameters and  both identifiable and non-identifiable models -- specifically, overfitted mixture models (Section \ref{sec:mixtures}). These results significantly fill a gap in the literature on MMLE and, importantly,  allow us to provide a cheap but sound proxy for the MMLE (Corollary \ref{cor:proxy}). Section \ref{sec:mixtures} is also of independent interest. Moreover, we clarify the conceptual framework underlying these results from a frequentist perspective. By noting that, in a genuine Bayesian approach, $m_\lambda^{(n)}(\cdot)$ is the {\em joint density} of $Y$  so that, formally, the class of models is  $\{m_\lambda^{(n)}, \lambda \in \Lambda\}$ and, when read as a function of $\lambda$, $m_\lambda^{(n)}$ is the {\em likelihood} -- while the model is commonly considered to be $\{p_\theta^{(n)}, \theta \in \Theta\}$ (even in Bayesian settings) and $m_\lambda^{(n)}$ read as the {\em marginal likelihood} -- we shed light on the nature of EBIB as a problem of {\em maximum likelihood} estimation under model misspecification - which seems to have been surprisingly overlooked in the literature, where ad-hoc terms such as {\em type II MLE} or MMLE are rather used.  {\em In this light}, it becomes natural to conjecture that {\em the MLE} $\hatl_n$ should converge to the pseudo-true value of $\lambda$. However, this is an unexplored case of model misspecification; to our knowledge, results are available only for the case where both the true and the misspecified densities admit some form of factorization, as in \cite{berk66, huber67, white82, douc12}, with the exception of \cite{Naulet23} in the context of a  specific multigraphex process, with partially explicit likelihood. The challenge and novelty  in our case is in the non-factorizable and non explicit nature of the misspecified model.  Moreover, in EBIB there is no true value of $\lambda$;  but we can give its {\em pseudo-true} value an interpretation  as a {\em KL-oracle} choice of prior hyperparameters, and provide its explicit form  by proving ({Theorem \ref{theo:KL}}) that, for a rather general class of priors and under mild regularity assumptions, any KL-oracle value is a {\em prior-oracle}.

On this basis, in Section \ref{sec:merging} we give higher-order merging results. We prove that, when $\sup_\lambda \pi_\lambda(\theta_0) < \infty$ and strong merging holds, the EB posterior distribution is a higher order approximation to the {oracle-Bayes} posterior within the given class of priors (Proposition\ref{prop:TVpost_gen}). As a by-product, this implies that the EB posterior is a faster approximation of the oracle $\pi_{\lambda^*(\theta_0)}(\,\cdot \, |\, y)$ than Bernstein-von Mises Gaussian approximation, and might be exploited, for instance, to compute approximate credible regions. We also prove results for predictive densities (Proposition \ref{prop: pred}).  Examples and extensions  are provided in Section \ref{sec:examples},  with a final discussion in Section \ref{sec:final}.  Proofs and technical details are deferred to the Supplement,  which also contains additional theoretical results on MMLE, further background on overfitted mixture models, and additional examples and numerical illustrations.

\section{Limit  behavior of the MMLE} \label{sec:MLE}

Let us first formalize the notation. We consider the standard Bayesian setting where conditionally on a random parameter $\theta$, taking values in $\Theta \subset \mathbb{R}^d$, the observations $Y$  have   probability law $P_\theta^{(n)}$  with  densities $p_\theta^{(n)}$ with respect to some measure {$\mu$}. Typically $Y = (Y_1, \ldots,Y_n)$ is a sample from an infinite sequence $(Y_i)_{i \geq 1}$, but it can be more general. The researcher chooses a class of prior distributions $\{\Pi_\lambda$, $\lambda \in \Lambda\}$ on $\Theta$ with densities $\pi_\lambda$ with respect to a measure $\nu$. The class of priors  is considered as given in our analysis. For a hyperparameter choice $\lambda$, the marginal density of $Y$ at $y$ is 
$$ 
m_\lambda^{(n)}(y) = \int p_\theta^{(n)}(y) \, \pi_\lambda(\theta) \diff\nu(\theta), 
$$ 
also referred to as the marginal likelihood, when seen as a function of $\lambda$. An {\em empirical Bayes} posterior density is obtained by plugging in a data-driven choice $\hatl_n \equiv \hatl_n(y)$, as  $\pi_{\hatl_n}(\theta \mid y) = p_\theta^{(n)}(y) \pi_{\hatl_n}(\theta) / m_{\hatl_n}^{(n)}(y). $ Typically, $\hatl_n$ is the MMLE of $\lambda$, obtained as 
\begin{equation}\label{eq:MLE}
\hat{\lambda}_n \in \arg \max_{\lambda \in \Lambda^{(n)}} m_\lambda^{(n)}(Y), \quad \Lambda^{(n)} \subseteq \Lambda.
\end{equation}
A more standard definition of the MMLE only refers to the case where $\Lambda^{(n)}=\Lambda$ and the term restricted MMLE is sometimes used when $\Lambda^{(n)}$ is a proper subset of $\Lambda$. Confining maximization of the marginal likelihood on a subset of $\Lambda$ may be desirable for practical reasons, see e.g. Section 2.1 of \cite{p+b20}, or  to ensure that a maximum exists, ruling out uninteresting regions of $\Lambda$. 

In this section, we study the asymptotic behavior of the MMLE under the true data probability law  $P_{0}^{(n)} = P_{\theta_0}^{(n)}$.  We first consider identifiable models; then, we focus on finite overfitted mixture models, which are strongly non-identifiable.

\subsection{General results for identifiable models} \label{sec:gene:identifiable}
We here consider identifiable models, where the true  density $p_0^{(n)} = p_{\theta_0}^{(n)}$ corresponds to a unique parameter $\theta_0 \in \Theta$.  In this case, \cite{p+r+s2014} showed that,  when $\sup_{\lambda \in \Lambda}\pi_{\lambda}(\theta_0)< \infty$, the MMLE $\hat \lambda_n$ converges to an {\em oracle set} $\Lambda^*(\theta_0)$ defined by
\begin{equation} \label{eq:oracle}
\Lambda^*(\theta_0)= \underset{\lambda} {\arg\max} \, \pi_\lambda(\theta_0) .
\end{equation}
They also conjectured, but did not prove, that a similar behavior could also hold for $\theta_0$ such that $\sup_{\lambda \in \Lambda}\pi_{\lambda}(\theta_0) = \infty$.  In this section we extend their results in three ways.  

Firstly, we give a much more articulated description of possible situations. As a basis, we consider a $d$-dimensional model's parameter $\theta$ and a structure of the prior distribution where the parameter vector $\theta$ is partitioned as $\theta = (\theta_1, \cdots, \theta_k)$, $k \leq d$, with a factorization of the prior density as 
\begin{equation}\label{eq:priorfactorization}
\pi_\lambda(\theta)=\prod_{j=1}^k \pi_{j}(\theta_j \mid \lambda_j), 
\end{equation}
with $\lambda_j \in \Lambda_j \subset \mathbb R ^{k_j}$.  Although necessarily not all-embracing, conditional independence structures of this nature are prevalent in Bayesian inference,  and allow to address the quite realistic situation where the  prior may explode for  some components of the parameter only - {which is not treated at all in \cite{p+r+s2014}}. As hinted in Section \ref{sec:intro}, this is typically the case when an EB selection of the prior hyperparameters is made at this stage; see, for instance, \cite{EB2019}, Sect. 1.1--1.2,  and \cite{neal2012}, Sect. 1.2.2--1.2.3 for further examples on generalized additive models and shallow neural networks; to name but a few. Specifically, we consider true values $\theta_0=(\theta_{0,1}, \ldots, \theta_{0,k})$ such that the prior explodes only for  some components $j$, say in a  subset $\mathcal{I}_\infty$ of $\{1, \ldots, k\}$ (for $\lambda_j$ on the boundary of $\Lambda_j$), while it is bounded otherwise; i.e., letting $q_{0,j}= \sup_{\lambda_j \in \Lambda_j} \pi_{\lambda_j }(\theta_{0,j}|\lambda_j)$,
\begin{equation}\label{eq:priorsetup}
q_{0,j}= \infty, \, j \in \mathcal{I}_\infty; \quad q_{0,j} <\infty, \, j \notin \mathcal{I}_\infty.
\end{equation}
The \lq\lq fully degenerate'' case,   $\sup_{\lambda_j} \pi_j(\theta_{0j}|\lambda_j)=\infty$ for all $j$, corresponds to $\mathcal{I}_\infty=\{1, \ldots,k\}$. The cases where $k>1$, and neither $\mathcal{I}_\infty$ nor $\mathcal{I}_\infty^c$ are empty, correspond to \lq\lq partially degenerate'' cases, which are conceptually and practically relevant. In the \lq\lq regular case'', $\mathcal{I}_\infty= \emptyset$. The latter is typically the case when, in a  \lq\lq fully Bayesian'' approach, a prior distribution $H_\gamma$ is assigned on $\lambda$, leading to a different joint prior  $\pi_\gamma(\theta)=\int \pi(\theta \mid \lambda) \, \diff H_\gamma(\lambda)$ on $\theta$, in turn indexed by hyperparameters that play the role of $\lambda$ in the above setting. Furthermore, the formulation \eqref{eq:priorfactorization} allows for high-dimensional models corresponding to allowing $k$  (and thus $d$) to grow with $n$, contrarywise to \cite{p+r+s2014}. For notational simplicity, we hereafter omit indexing $k$, $\mathcal{I}_\infty$ or $d$ by $n$.

Secondly, we study the convergence of the MMLE $\hatl_n=(\hatl_{n,1}, \ldots, \hatl_{n,k})$ to the set of maximizers of $\pi_j(\theta_{0j}|\lambda_j)$  for all components $j = 1, \ldots, k$, while existing results \citep{p+r+s2014} only cover the fully regular case where $\mathcal{I}_\infty =\emptyset$. Let us denote by $\Lambda_j^*{{(\theta_{0,j})}}$ the set of $\lambda_j^*\in \Bar{\Lambda}_j$, the closure of $\Lambda_j$,  that are maximizers of $\pi_j(\theta_{0j}|\lambda_j)$, i.e. such that  there exists $(u_m)_{m\geq 0}$  with $\pi_{j}(\theta_{0j}|u_m) \rightarrow q_{0j}$ and $\lambda^*_j = \lim_m u_m$.  We then define the  oracle set as
\begin{equation}    \label{eq:lambdaStar} 
\Lambda^* \equiv \Lambda^*(\theta_0) = \Lambda_1^*(\theta_{0,1}) \times \ldots \times \Lambda_k^*(\theta_{0,k}).
\end{equation}
We stress that, in fully or partially degenerate cases, the above set is smaller than the oracle set considered in \cite{p+r+s2014} -- see Example \ref{ex:mixedreg} below -- thus providing a much more precise notion of oracle hyperparameters and a more precise description of the asymptotic behaviour of $\hat \lambda_n$.

Thirdly, we show implications when EBIB is framed as a classic problem of {\em maximum likelihood}  estimation under model misspecification, as introduced in Section \ref{sec:contributions-new}. Conceptually, in this perspective the fully regular, degenerate and partially degenerate cases individuated above correspond, respectively,  to cases where the true density $p_{\theta_0}^{(n)}$ is outside the class of models ${\cal M}=\{ m_\lambda^{(n)}, \lambda \in \Lambda \}$, or may reach its boundary (for $\lambda$ approaching the boundary of $\Lambda$), or is outside the class $\cal M$ but, informally speaking, may reach the boundary for some components.

\subsubsection{ Convergence of the MMLE }  \label{sec:main-limit-results}
To keep the technical aspects to a minimum,  here and throughout the paper, detailed assumptions and proofs are omitted and provided in the Supplement. Let us define the semimetrics 
$$ 
d_j(\lambda_j , \lambda_j') = \left|\frac{\pi_j(\theta_{0j}|\lambda_j)}{1+\pi_j(\theta_{0j}|\lambda_j)} - \frac{\pi_j(\theta_{0j}|\lambda_j')}{1+\pi_j(\theta_{0j}|\lambda_j')}\right|
$$
on $\bar \Lambda_j$, and the semimetric $d(\lambda, \lambda') = \max_jd_j(\lambda_j, \lambda_j')$ on $\bar \Lambda$. This allows to  both handle possible non unicity of the maximizers of $\pi_j(\theta_{0j}|\lambda_j)$ and to  approach $q_{0j}=\infty$.  For any subsets $\Lambda', \Lambda''$ (possibly singletons), we denote $d(\Lambda', \Lambda'')= \inf_{\lambda' \in \Lambda', \lambda'' \in \Lambda''}d(\lambda', \lambda'')$.
\begin{theo}\label{theo:MMLE} 
Assume Conditions \numbCondMMLE-\numbCondPriorBound$\,$ in Section B.1 of the Supplement and assume that $\Lambda^{(n)}$ in \eqref{eq:MLE} is such that $\sup_{\lambda \in \Lambda}d(\lambda,\Lambda^{(n)})\to 0$. Then, as $n\to\infty$,  $d(\hat \lambda_n ,\Lambda^*) \to 0$ in probability under $P_{\theta_0}^{(n)}$, where $\Lambda^*$ is the prior-oracle set defined in (\ref{eq:lambdaStar}). \\
In particular, if  the set $\Lambda_j^*(\theta_{0,j})= \{ \lambda_j^*(\theta_{0,j})\}$ is a singleton  and {the map} $\lambda_j \mapsto d_j(\lambda_j, \lambda^*_j(\theta_{0,j})) $ has continuous inverse at $0$, then also $\hatl_{n,j} \to \lambda_j^*(\theta_{0,j})$ in the Euclidian metric.
\end{theo}
As commented earlier, this result substantially extends the scope of Theorem 1 in \cite{p+r+s2014}. The proof is provided in Section C.3 of the Supplement. 
Therein, Assumption \numbCondMMLE$\,$ allows to control the (penalized) likelihood ratio away from $\theta_0$ and Assumption \numbCondPriorBound$\,$ controls $\pi_\lambda(\theta)$ for $\theta$ near $\theta_0$.  It is satisfied in particular by regular parametric models, but can hold also when $k$ goes to infinity. Detailed comments are provided in Section B.1 in the Supplement. It is worth noting that, differently from Theorem 1 in \cite{p+r+s2014}, the proof uses carefully devised techniques for handling  prior densities near $\theta_0$, in order to cope with degeneracies, and for allowing the dimension of $\theta$ to increase.

\subsubsection{ Convergence of the  KL minimizers  } 
The following result, which is of some independent interest, takes the KL perspective regarding $\hatl_n$ as {\em the  MLE} under model misspecification, and clarifies the nature of the KL pseudo-true value. Informally, it shows that, for the large variety of cases delineated above,  any KL minimizer is also asymptotically a prior-oracle value in the sense of \eqref{eq:oracle}.
\begin{theo}\label{theo:KL}
	Assume Conditions  \numbCondPriorBound$\,$ and \numbCondKL$\,$ in Section B.1 of the Supplement.  Let $(\lambda_n^*)$ be any sequence of minimizers
    of $KL(p_{\theta_0}^{(n)}\Vert m_\lambda^{(n)})$ over    $\Lambda$, i.e. satisfying for every $n$
	\begin{equation}\label{eq:asympmin}
    \lim_{\lambda \to \lambda_n^*} 
		KL(	p_{\theta_0}^{{(n)}}
	\Vert 
    m_{\lambda}^{(n)})
	= {\inf_{\lambda \in {\Lambda}}}	KL( p_{\theta_0}^{{(n)}}
    \Vert	m_{\lambda}^{(n)}
	).
 \end{equation}
Then, $d(\lambda^*_n, \Lambda^*) \to 0$ as $n \to \infty$, where $\Lambda^*$ is the prior-oracle set defined in \eqref{eq:lambdaStar}. 

Moreover, if the set $\Lambda_j^*(\theta_{0,j})= \{ \lambda_j^*(\theta_{0,j})\}$ is a singleton  and $\lambda_j \mapsto d_j(\lambda_j, \lambda^*_j(\theta_{0,j})) $ has continuous inverse at $0$, then also $\lambda_{n,j}^* \to \lambda_j^*(\theta_{0,j})$ in the Euclidian metric.

\end{theo}
The proof is provided in Section C.1 of the Supplement. This result is consistent with Theorem 2.1 of \cite{c+b90}, {which is} given in the different context of Jeffrey's priors but allows to draw a similar conclusion in the case of i.i.d. data for strongly regular models of fixed finite dimension, and smooth prior densities, with the restriction that $\sup_{\lambda \in \Lambda}\pi_\lambda(\theta_0)<\infty$. We stress that our result is more widely applicable, covering, e.g., the case of non-identically distributed observations (such as in fixed design regression) or dependent data (such as finite Markov chains, see Section D.2 in the Supplement), and also holds for high-dimensional models and (partially) degenerate cases. To handle such a higher level of generality, we rely on assumption \numbCondKL$\,$ (Section B.1 of the Supplement), keeping to a minimum standard conditions in frequentist asymptotics for Bayesian nonparametric posteriors \citep[e.g.,][]{bnp2017}. In particular,  it requires sufficient prior mass on sets where the KL divergence $K_n(\theta)= E_{\theta_0}^{(n)}[\log p_{\theta_0}^{(n)}(Y)-\log p_\theta^{(n)}(Y)]$ and $p$-th variation $V_{n,p}(\theta):=E_{\theta_0}^{(n)}\left[ \log p_{\theta_0}^{(n)}(Y)-\log p_{\theta}^{(n)}(Y)-K_n(\theta)\right]^p$ are not \lq\lq too large\rq\rq. 

The following corollary pinpoints the consequent different scenarios for KL limiting values; namely,  (a)  $\mathcal{I}_\infty \neq \{1,\ldots,k\}$;  the true model is outside the (\lq misspecified') class ${\cal M}=\{m_\lambda^{(n)}, \, \lambda \in \Lambda \}$,   (b) $\mathcal{I}_\infty=\{1,\ldots,k\}$; the true model may be on the boundary of $\cal M$. 

\begin{cor}\label{cor:miss}
	Assume Conditions \numbCondPriorBound$\,$ and \numbCondKL$\,$ in Section B.1 of the Supplement hold true and let $\lambda_n^*$ be  any sequence of KL minimizers in the sense of  \eqref{eq:asympmin}.
	\begin{inparaenum}
    	\item \label{res:mult} If $\mathcal{I}_\infty \neq \{
        1,\ldots,k \}$ and, for some  $j \in \mathcal{I}_\infty^c$, there exist $\eta \in (0,1)$ and $\epsilon>0$ so that $\pi_j(\theta_j|\lambda_j)$ is uniformly bounded for $\Vert \theta_j -\theta_{0,j}\Vert<\epsilon$ and $\lambda_j$ for which $\pi_j(\theta_{0,j}|\lambda_j) >(1-\eta)q_{0,j}$, then	$\liminf_{n \to \infty} \lim_{\lambda \mapsto \lambda_n^*}KL(	p_{\theta_0}^{{(n)}}\Vert 	m_{\lambda}^{(n)}) >0.	$

		\item \label{res:sing} If $\mathcal{I}_\infty=\{1, \ldots,k\}$
        (fully degenerate case), $\Pi_\lambda$ converges weakly to $\delta_{(\theta_{0})}$ as $\lambda \to \lambda^*$, for all $\lambda^* \in \Lambda^*$, and $K_n(\theta)$, $V_{n,p}(\theta)$ are continuous at $\theta_0$ for every $n$, then  $\lim_{\lambda \mapsto \lambda_n^*}KL(	p_{\theta_0}^{{(n)}}
		\Vert m_{\lambda}^{(n)})=0$   for every $n$. In particular, this is the case if $\lambda_n^* \in \Lambda^*$.

	\end{inparaenum}
\end{cor}
The proof of Corollary \ref{cor:miss} is provided in Section C.2 of the supplement.

 \noindent In combination with Theorem \ref{theo:MMLE}, Theorem \ref{theo:KL}  has the following interesting implication.
\begin{cor}\label{cor:MMLE_kl}
    Under the assumptions of Theorem \ref{theo:MMLE} and the additional Condition \numbCondKL, if each set $\Lambda_j^*(\theta_{0,j})$ consists of a single point $\lambda_j^*(\theta_{0,j})$, then $d(\hatl_n,  \lambda^*_n) \to 0$  in probability under $P_{\theta_0}^{(n)}$, for any sequence $(\lambda_n^*)$  of  KL minimizers as in \eqref{eq:asympmin}.
\end{cor}

In light of Corollary \ref{cor:miss}, the convergence result above implies that, when there is no regular component in the prior and $\sup_\lambda \pi_\lambda(\theta_0)= \infty$ occurs because the prior degenerates at $\theta_0$, i.e. $\Pi_{\lambda^*{(\theta_0)}} = \delta_{(\theta_0)}$,  then the MMLE approaches the hyperparameter choice for which the KL information loss is null (Corollary \ref{cor:miss}\ref{res:sing}); more subtly, whenever $\mathcal{I}_\infty \neq \{1, \ldots,k\}$, the minimum KL information loss with MMLE remains positive (Corollary \ref{cor:miss}\ref{res:mult}).

\subsubsection{Proxy for the MMLE} \label{sec:proxy}
We highlight a practical implication of the limit results of this section, as given in Corollary \ref{cor:MMLE_kl}, for the construction of simple and computationally convenient proxies of the MMLE.

\begin{cor}\label{cor:proxy}
 Under the assumptions of Theorem \ref{theo:MMLE} (where in Condition $\numbCondPriorBound$ $\epsilon_n$ is replaced by an arbitrarily small $\epsilon>0$),  for any estimator $\hat{\theta}_n =(\hat{\theta}_{n,1}, \ldots,\hat{\theta}_{n,k})$ converging to $\theta_0$  in $P_{\theta_0}^{(n)}$-probability and with $\lambda^*(\hat{\theta}_n)=(\lambda_1^*(\hat{\theta}_{n,1}), \ldots, \lambda_k^*(\hat{\theta}_{n,k}))$ where $\lambda^*_j(\hat{\theta}_{n,j}) \in \arg \max_{\lambda_j} \pi_{j}(\hat{\theta}_{n,j}|\lambda_j)$ for $1 \leq j \leq k$, it holds that $d(\lambda^*(\hat \theta_n), \hat \lambda_n) \to 0$  in $P_{\theta_0}^{(n)}$-probability. 
\end{cor}

Similarly to before, if $\Lambda^*$ is a singleton at $\lambda^*(\theta_0)$, then the MMLE $\hat \lambda_n$ and the proxy-MMLE $\hatl_n^{\text{proxy}} \equiv$ $\lambda^*(\hat \theta_n)$ are close to each others in the sense that they both converge to the oracle $\lambda^*(\theta_0)$.  Corollary \ref{cor:proxy} holds for fixed dimension $k$. When $k$ goes to infinity and if,  for all $j$, $\|\hat \theta_{n,j} - \theta_{0j}\| \to 0$ in $P_{\theta_0}^{(n)}$-probability, then  $d_j(\lambda^*_j(\hat \theta_n), \hat \lambda_{n,j}) \to 0$ in $P_{\theta_0}^{(n)}$-probability for all $j$, and convergence also holds in standard Euclidean metric under the above-mentioned continuity conditions (Theorems \ref{theo:MMLE}--\ref{theo:KL}). As we will illustrate in Section \ref{sec:examples}, the expression of $\lambda^*(\theta) $  is often available in closed form, making the computation of $\hatl_n^{\text{proxy}}$ straightforward; even when it is not, computing $\hatl_n^{\text{proxy}}$ by maximizing the prior components $\pi_j(\hat{\theta}_{n,j}|\lambda_j)$ is typically much less demanding than maximizing the marginal likelihood. 

\begin{ex}  \label{ex:mixedreg} 
Consider the linear regression model described in Section \ref{sec:intro}, now letting $\sigma^2 \sim \pi$, a fully specified continuous and positive density on $\mathbb R_+$. The choice of the hyperparameters $\tau_j^2$ controls shrinkage, and in an EB fashion one would use their MMLEs. In the notation of \eqref{eq:priorsetup}, the true vector of coefficients $(\beta_0, \sigma_0)$ is partitioned as  $(\theta_{0,1}, \ldots, \theta_{0,k})$ where  $p=k$ and, without loss of generality, $\theta_{0,1}=(\beta_{0,1}, \sigma_0)$ and $\theta_{0,j}=\beta_{0,j}$, $2 \leq j \leq p$. We have an analogous partition of the hyperparameter vector  as $\lambda=(\lambda_1, \lambda_2, \ldots,  \lambda_{k})$ where $\lambda_j = \tau_j$, $1\leq j \leq p$. Moreover, the prior density can be factored into $k$ components $\pi_1(\theta_1|\lambda_1)= \mathcal{N}(\theta_{1};0, \tau_{1})\pi(\sigma^2)$ and $\pi_j(\theta_j|\lambda_j)=\mathcal{N}(\beta_{j};0,\tau_{j})$ for $2 \leq j \leq k$, which comply with \eqref{eq:priorsetup} with $\mathcal{I}_\infty=\{j: \, \beta_{0,j}=0 \}$. It is easy to verify that the unique maximizers of $\pi_\lambda(\theta_0)$ are $\lambda^*_{j}(\theta_{0,{j}})=\beta_{0,j}^2$ for $j \notin \mathcal{I}_\infty$ and $\lambda^*_{j}(\theta_{0,{j}})=0$ if $j \in \mathcal{I}_\infty$,  so that $\Lambda_j^*(\theta_{0,j})=\{\lambda^*_j(\theta_{0,j}) \}$ and the oracle prior  on the null beta's is degenerate on zero. We point out that the oracle set envisaged in \cite{p+r+s2014} is  much wider, including all hyperparameter vectors $\lambda$ such that $\lambda_j \in (0,\infty)$ for all $j \notin \mathcal{I}_\infty$ and  \textit{at least} one component $\lambda_j$, with $j \in \mathcal{I}_\infty$, is null.

The conditions of Corollary \ref{cor:MMLE_kl} are verified in Section D.1 of the Supplement, under suitable assumptions on the design matrix as soon as the dimension satisfies $p = o(n/\log n)$. Therefore,  both the MMLE $\hat \lambda_{n}$ and the KL  minimizers $\lambda_n^*$  approach the prior  oracle vector $\lambda^*(\beta_0)$. Table \ref{fig:tableMMLE_mixreg}, anticipated in Section \ref{sec:intro}, shows a good approximation of the MMLE through the proxy-MMLE $\hatl_{n}^{\text{proxy}}=\hat{\beta}_{\text{OLS}}^2$ (there, $\sigma^2$ was known).  For the same setting, Tables 2 and 3 of the Supplement also report the mean absolute errors $|\hat{\tau}_{n,j}-\hat{\tau}_{n, j}^{\text{proxy}}|$ and the empirical coverage of posterior credible intervals for $\beta_j$'s computed over 1000 samples. The MMLE is obtained through the sequence of updates from MacKay \citep[see][eq. (16)]{Tipping2001}. If $\sigma^2$ is unknown, the MMLE may be computed through the algorithm by \cite{dossLinero2024} (Sect. 2.2, pp. 605--607), leveraging on our proposed proxy MMLE for centering the required auxiliary hyperprior. 
\end{ex}

\subsection{Finite overfitted mixtures}\label{sec:mixtures}

In the previous section, we obtained results under the identifiability assumption that $P_0^{(n)}= P_{\theta_0}^{(n)}$ for a unique $\theta_0$.  For non-identifiable models, the prior-oracle set $\Lambda^*$  of the maximizers (in $\lambda$) of $\pi_{\lambda}(\theta_0)$ is not well defined. However, it might still be possible to define a prior-oracle set associated to the limiting behaviour of $\hat \lambda_n$ and $\lambda_n^*$. To illustrate this, we consider the important example of overfitted mixture models and we show that if $\lambda$ parametrizes the Dirichlet prior on the weights of the mixture, $\hat \lambda_n$ has a well-defined asymptotic behaviour, which is strongly related to a prior-oracle on the clustering structure.

More precisely, consider a mixture model as in \eqref{model:mixt}.  Assume, as in \cite{r+m2011}, that the true distribution is 
\begin{equation}\label{true:mixt}
f_0(y) = \sum_{j=1}^{K_0} \, p_j^o \; g_{\gamma_j^o}(y), \quad  K_0 < K.
\end{equation}
Then the model is not identifiable (even up to label switching) since $f_0$ can be written as $f_{\theta_0}$ with $\theta_0$ given by either emptying or merging the extra components. Namely, the true density $f_0=f_{\theta_0}$ can be reached by any $\theta_0$ of the form $\theta_0=( (p_j^o, \gamma_j^o), j=1, \ldots, K_0; \; p_j=0, j=K_0+1, \ldots, K)$ (emptying components) or $\theta_0= ((\gamma_i = \gamma_j^o, \; \sum_{i\in I_j}p_i=p_j^o), i \in I_j, \; j\leq K_0)$  for any partition of the mixture's indexes $i=1, \ldots, K$ in groups $I_j$ such that $\theta_0$ leads to the true $f_0$ (merging components). Thus, all the parameters $\theta_0$ above verify $f_0 = f_{\theta_0}$. In \cite{r+m2011} it is shown that if the prior on $\theta$ has the following form:
\begin{equation}\label{prior:mixt}
(p_1, \cdots, p_K) \sim \mathcal D(\lambda, \cdots, \lambda) \quad \mbox{and, independently,} \quad \gamma_j\stackrel{iid}{\sim} \pi_\gamma, \quad j=1, \ldots, K, 
\end{equation}
with $\lambda>0$, and under regularity assumptions on the model, then if $\lambda<d_0/2$, where $d_0$ is defined in \eqref{model:mixt}, the posterior distribution concentrates on the emptying extra-components configuration,  while, if $\lambda>d_0/2$, then the posterior concentrates on the merging-components configuration. As noted in Section \ref{sec:intro}, the choice of $\lambda$ still remains an imprecise issue since for finite $n$ it is not clear which value in the range $0 < \lambda < d_0/2$ should be chosen. It is therefore of interest to investigate if selecting $\lambda$ by maximizing the marginal likelihood would provide a reasonable answer, notably since $\lambda$  governs  the prior on the underlying clustering structure  of the data. The Dirichlet distribution on the mixture weights induces a probability law on such a clustering structure (or random partition), and in particular on the random number $H_n$ of nonempty clusters in $n$ observations from $f_\theta$: 
\begin{equation}    \label{eq:Hnprior}
\Pi_{\lambda}(H_n = h) = \frac{K!}{(K-h)!}\frac{\Gamma(\lambda K)}{\Gamma(\lambda K +n) } \mathscr{C}(n,h;\lambda),
\end{equation}
where $\mathscr{C}(n,h;\lambda)=(1/h!) \sum_{j=1}^h {h \choose j} (-1)^{h-j} \Gamma(n+\lambda j)/\Gamma(\lambda j)$ (see Theorem 3 of \citealp{l+p+r2020}). It can be easily seen that this prior law concentrates the unit mass on  $H_n=1$ for $\lambda \rightarrow 0$,  while for any fixed $\lambda>0$ it concentrates around $H_n = K$ - which is clearly undesirable if the mixture is overfitted. Thus, even restricting the selection to $(0, d_0/2)$, the issue is not only to select a value of $\lambda$ in that range, but to study whether EBIB by MMLE may give a \lq\lq well-informed'' selection $\hatl_n$ tracking an oracle sequence that increasingly penalizes an excessive number of clusters; intuitively, that converges to zero at an appropriate rate - not too fast (as $\lambda=0$ gives a trivial one-cluster configuration), but favoring the true number $K_0$ of clusters.

We show below in Theorem \ref{th:mixtures} that indeed  $\hat{\lambda}_n = O_{P_0}( \log \log n/ \log n) $ and  that  the Kullback-Leibler minimizer $\lambda_n^*=O(\log \log n/\log n)$, they therefore both have a similar asymptotic behaviour. Interestingly, the maximizer in $\lambda$ of $\Pi_{\lambda}(H_n = K_0)$ is asymptotically equivalent to $\lambda_n (K_0):= c^*/\log n$ with $c^* = \log (K-1) - \log (K-K_0)$ for $K_0>1$ and $\lambda_n(K_0) = 0$ for $K_0=1$. We therefore show that $\hat \lambda_n $ and $\lambda_n^*$ are asymptotically close to the maximizer of $\Pi_\lambda (H_n = K_0)$.  Since the clustering structure of the data  is of interest in many contexts, Theorem \ref{th:mixtures} has interest in its own right. It is however still unclear if the EBIB posterior on $H_n$ concentrates on $K_0$. 

\begin{theo}\label{th:mixtures} 
Consider data $Y = (Y_i, i=1, \ldots, n)$ with $Y_i$ i.i.d. from $P_0$ with density \eqref{true:mixt} and consider the model \eqref{model:mixt} with prior of the form \eqref{prior:mixt}. Assume in  addition that the component's density $g_\gamma$  verifies the smoothness and strong identifiability assumptions of \cite{r+m2011}, as recalled in Section C.4.1 of the Supplement. If $\pi_\gamma$  is positive and continuous on $\Gamma$, then for any $M>0$
\begin{align*} 
\hat \lambda_n &= \text{\emph{argmax}}_{\lambda \in (0, M]} m_\lambda^{(n)}(Y) = O_{P_0}\left( \frac{ \log \log n }{ \log n} \right),\\
 \lambda_n^* &= \text{\emph{argmin}}_{\lambda \in (0,  M]} KL(p_0^{(n)} \Vert m_\lambda^{(n)}) = O\left( \frac{ \log \log n }{ \log n} \right).
 \end{align*}
\end{theo}

Theorem \ref{th:mixtures} shows that both  the MMLE and the Kullback-Leibler minimizers are of order $O(\log \log n/\log n)$. A full version of Theorem \ref{th:mixtures} is provided in Section C.4.1 of the Supplement, where optimization over $\lambda$ is allowed to be over $(0, L_n)$ where $L_n$ grows to infinity at a certain rate. This shows that the MMLE belongs to the range $(0,d_0/2)$ yielding configurations where the extra states have very small posterior probability, thus allowing for interpretable parametric inference. In fact, given the result of \cite{r+m2011}, it makes sense to directly compute the restricted MMLE over $(0, d_0/2)$.

\begin{ex} Consider an overfitted bivariate Gaussian location mixture  with $K=10$ components and known diagonal covariance matrix, and let us compare different choices of $\lambda$. Figure \ref{fig:mix} shows the log marginal likelihood of $\lambda$ (left-hand panel) and the priors on $H_n$ (right-hand panel) with $\lambda=1$, corresponding to a uniform prior on the mixture weights, often used in common practice;  $\lambda$ equal to MMLE $\hatl_n= \arg\max_{\lambda \in (0,1)}m_\lambda^{(n)}(Y)$ and a resticted version, $\hatl_n^{\text{res}} = \arg\max_{\lambda \in \Lambda^{(n)}}m_\lambda^{(n)}(Y)$ with grid $\Lambda^{(n)}= \{ \log\left(\frac{K-1}{K-h}\right)/\log n, \, h=2, \ldots, K-1 \}$ motivated by the formula of $\lambda_n(K_0)$;  $\lambda$ equal to a proxy-MMLE $\lambda_n(\hat{K}_n)$ that uses the consistent estimator of $K_0$  with smoothly clipped absolute deviation penalty \citep{manole21}. The displayed results are based on  a synthetic random sample of size $n=400$ from $f_{0}= 0.5 \mathcal{N}_2(\mu_{0,1}, \text{diag}(1,1))+0.5 \mathcal{N}_2(\mu_{0,2}, \text{diag}(1,1))$, with $\mu_{0,1}=(0,0)^T$, $\mu_{0,2}=(5,5)^T$ and $K_0=2$. The prior distribution used for  emission parameters is $ \mu_j \overset{iid}{\sim} \mathcal{N}( \xi, \text{diag}(\tau_1, \tau_2 ))$, with $\xi\approxeq (2.57, 2.48)^T$ and $\tau_i\approxeq 0.25$, $i=1,2$.

Although the MMLE, its restricted version and the proxy-MMLE we obtained are in the range $(0, d_0/2)$ (here $d_0/2=1$) the proxy-MMLE $\approxeq 0.0197$ equals  the $H_n$-prior oracle $\lambda_n(K_0)$ and induces a prior mode at the true number of clusters $K_0$; while the restricted MMLE is larger, $\hat{\lambda}_n^{\text{res}} \approxeq 0.367$, thus yielding a prior on $H_n$ which favours more clusters, yet much less than the prior with ordinary MMLE $\hatl_n \approxeq 0.63$ and with default choice $\lambda=1$. This suggests a slower convergence of the MMLE $\hatl_n$ to 0 than $\lambda_n(K_0)$ and that the rate obtained in Theorem \ref{th:mixtures} may be sharp.
\begin{figure}[h]   
\label{fig:mix}
    \centering
    \includegraphics[width=0.49\linewidth]{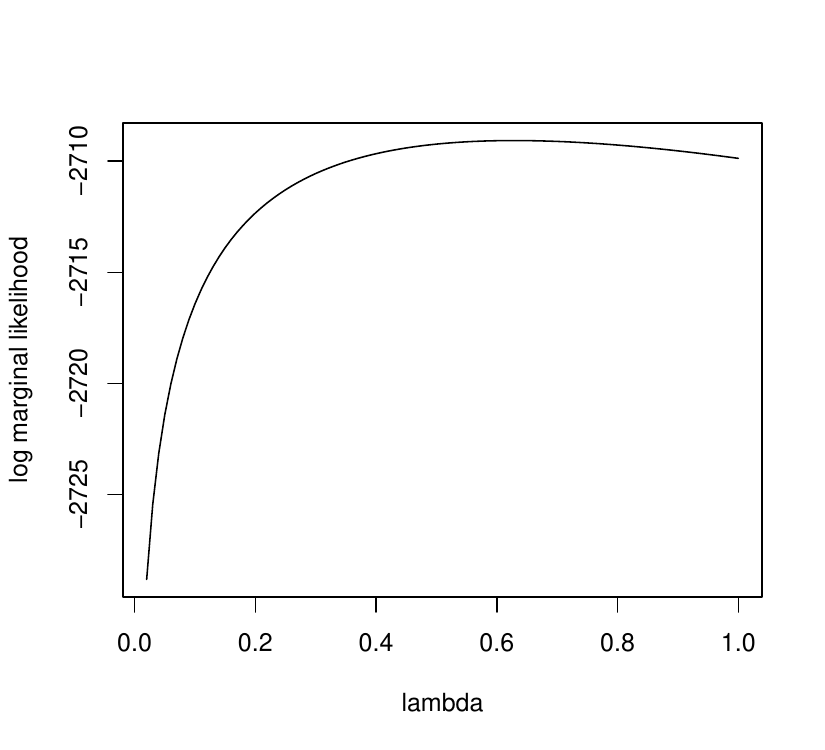}
    \includegraphics[width=0.49\linewidth]{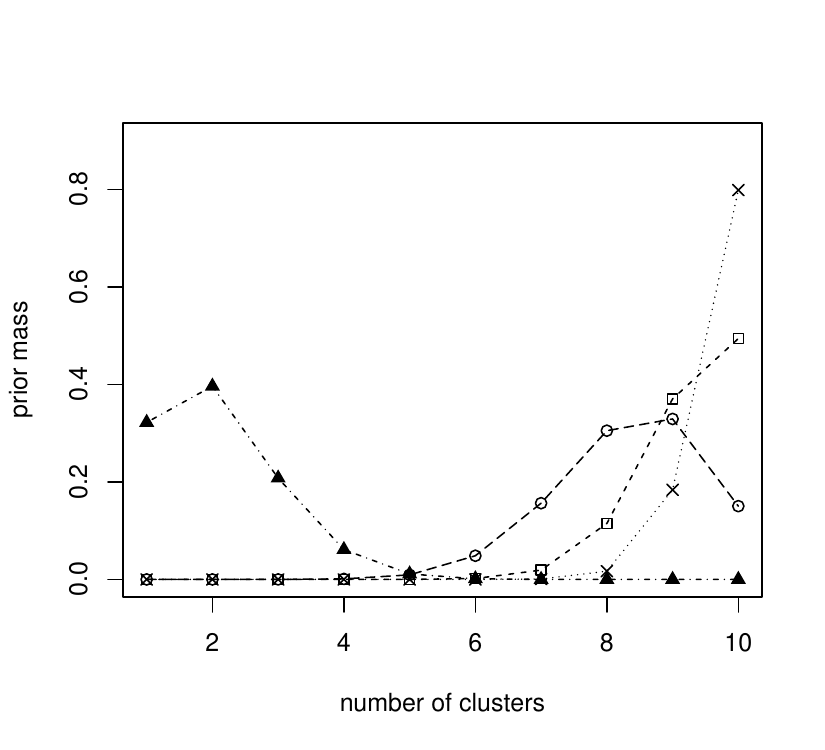}
    \caption{\small Log marginal likelihood (left-hand panel) and prior probability mass functions (right-hand panel) of the number $H_n$ of nonempty clusters in a sample ($n=400$) from an overfitted Gaussian location mixture ($K=10)$, with prior hyperparameters $\lambda=1$ (dot-dashed line with crosses), $\lambda= \hatl_n$=0.63 (dashed with squares); $\lambda=\hatl_n^{\text{res}}=0.367$ (long-dashed line with circles); and $\lambda=\lambda_n(\hat{K}_n)=0.0197$ (dot-dashed line with triangles). The true number of clusters is $K_0=2$. } 
\end{figure}
\end{ex}

\section{EBIB approximations to Bayesian inference} \label{sec:merging}

The results in the previous sections show that, even when the prior $\pi_\lambda$ can be degenerate (for limit values of $\lambda$) at all or some components of the true $\theta_0$, the MMLE {may} still converge to the prior-oracle $\lambda^*(\theta_0)$; however, for those $\theta_0$, under general conditions as in  Theorem 2 of \cite{p+r+s2014},  the EB posterior distribution of the vector $\theta$ {\em will not merge} with any Bayesian posterior from a non-degenerate prior. 
Strong merging of EB and Bayesian joint posterior distributions of $\theta$ can only hold in general in the regular case i.e. for true values $\theta_0$ for which $\sup_\lambda \pi_{\lambda}(\theta_0) < \infty$ and $\Lambda^* \subset \Lambda$; this is the case we consider in this section. In fact, as discussed in Section \ref{sec:intro}, even this case requires finer results: first-order comparisons do not allow for distinguishing among different posterior laws.  A simple but insightful example, helping intuition, is provided in Section D.3 of the Supplement. Indeed, under slightly stronger assumptions than those of Theorem  \ref{theo:MMLE}, the conclusions of Theorem 1 in \cite{p+r+s2014} hold in probability, so that the EB posterior distribution $ \Pi_{\hatl_n}(\cdot | Y) $ and, for any $\lambda$, the Bayesian posterior distribution $ \Pi_\lambda(\cdot | Y) $ are  not only consistent at $\theta_0$,  but their densities are also strongly merging, i.e. $\Vert\pi_{\hatl_n}(\cdot | Y) - \pi_{\lambda}(\cdot | Y)\Vert_1=o_{P_0}(1)$; in fact,  the EB posterior density merges strongly with {\em any}  Bayesian posterior from a prior density positive and continuous at $\theta_0$. Moreover, under additional  regularity assumptions on the likelihood (see e.g. \citealp{vaart_2000}), those Bayesian posteriors are asymptotically Gaussian, by the Bernstein-von Mises property; specifically, each of them merges in $L_1$ with the density sequence of $\mathcal{N}(\theta_0+ \Delta_n/\sqrt{n}, I(\theta_0)^{-1}/n)$, where $\Delta_n= I(\theta_0)^{-1}S_n(\theta_0)$,  $S_n(\theta_0)$ denotes the score process and $I(\theta_0)$ is the Fisher information at $\theta_0$. The same is true for the EB posterior density.  To understand which posterior distribution the EB posterior is specifically targeting,  it is thus necessary  to consider higher order approximations. To this end, {we here provide} a higher order $L_1$-comparison of posterior and predictive densities. For technical convenience, we assume that $\Lambda^*(\theta_0)$ consists of a single point $\lambda^* \equiv \lambda^*(\theta_0)$ such that $\pi_{\lambda^*}(\theta_0)<\infty$.

\subsection{Higher order analysis of posterior merging}\label{sec:nondegenerate}

Our higher order asymptotic comparison of posterior distributions is based on general conditions on the log likelihood (Conditions B.5--B.6 in Section B.3 of the Supplement) commonly required for higher order asymptotics, see for instance \cite{kass1990} on pages 483--484. In the sequel, for any function $g: \Theta \mapsto \mathbb{R}$, we write $\nabla g(\theta)$ and $\nabla^2 g(\theta)$  respectively for the vector of first derivatives and the matrix of second derivatives  with respect to $\theta$, and we set $\psi(\theta,\lambda)=\log \pi_\lambda(\theta)$, which is assumed to be differentiable (Condition B.7 in Section B.3 of the Supplement). Define $\Delta_{\theta_0}(\lambda,\lambda'): =	\nabla\psi(\theta_0, \lambda)- \nabla\psi(\theta_0, {\lambda}'),$ for two different hyperparameters $\lambda, \lambda' \in \Lambda$; moreover, let $I_0 $ be a short notation for $I(\theta_0)$.        
\begin{prop}\label{prop:TVpost_gen} Assume Conditions B.4(c)-(d) and B.6--B.7 in Sections B.2--B.3  of the Supplement are satisfied.  Then, for any $\lambda \in \Lambda$, 
	\begin{equation}\label{eq:quadratic}
	\Vert \pi_{\hat{\lambda}_n}(\cdot|Y)- \pi_{\lambda}(\cdot|Y)\Vert_1 =
	{\sqrt {\frac{2}{\pi}}}
	\left\lbrace
	\frac{
		\Delta_{\theta_0}(\hat{\lambda}_n,\lambda)^T I_0^{-1}
		\Delta_{\theta_0}(\hat{\lambda}_n,\lambda)}
	{n}\right\rbrace^{1/2} + o_{P_0}(n^{-1/2})
	\end{equation}
	and, in particular, $\Vert \pi_{\hat{\lambda}_n}(\cdot|Y)- \pi_{\lambda^*}(\cdot|Y)\Vert_1 =o_{P_0}(n^{-1/2})$. 
\end{prop}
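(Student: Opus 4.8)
The plan is to pass to the local coordinate $u = \sqrt{n}(\theta-\theta_0)$ and to exploit that the two densities $\pi_{\hat{\lambda}_n}(\cdot\mid y)$ and $\pi_\lambda(\cdot\mid y)$ share \emph{the same} likelihood, differing only through their prior factors. Since the $L_1$ distance is invariant under this smooth reparametrization, I would first localize: using Condition \ref{cond:MLE:reg} together with the tests and sieves $\Theta_n$ (and the partition $\{\Lambda^\ell\}$, needed to obtain bounds uniform in $\lambda$ because $\hat{\lambda}_n$ is data-dependent), one shows that the numerator mass of each posterior outside $U_\epsilon$ is $O_{P_0}(n^{-H})$ while the normalizing constant is $\gtrsim n^{-d/2}$ (Laplace, using positivity of $\pi_\lambda(\theta_0)$ from Condition \ref{assumpt:prior}), so the tail mass is $O_{P_0}(n^{-(H-d/2)}) = o(n^{-1/2})$ precisely because $H>d/2+1$. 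The quadratic expansion of $\ell_n$ furnished by Condition \ref{assumpt:gen_merg} then further confines the mass to $\|u\|\leq M_n$ for a slowly growing $M_n$, reducing the problem to an integral over a region where the Taylor expansions below are licit.

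On the localized region I would write $\pi_\lambda(u\mid y) = L(u)P_\lambda(u)/\int L P_\lambda$, where $L(u) = R_n(\theta_0+u/\sqrt{n};Y)$ is common to both posteriors and $P_\lambda(u) = \pi_\lambda(\theta_0+u/\sqrt{n})$. Expanding the log-prior by Condition \ref{assumpt:prior} gives $P_\lambda(u) = \pi_\lambda(\theta_0)\exp\{\nabla\psi(\theta_0,\lambda)^{t}u/\sqrt{n} + O(\|u\|^2/n)\}$, and $\pi_\lambda(\theta_0)$ cancels in the ratio. Writing $q$ for the likelihood-only local posterior $q(u)\propto L(u)$ and expanding the exponential and the normalizing integral to first order yields
\begin{equation*}
\pi_\lambda(u\mid y) = q(u)\Big(1 + \tfrac{1}{\sqrt{n}}\,\nabla\psi(\theta_0,\lambda)^{t}\big(u - E_q[u]\big) + O\big(\tfrac{\|u\|^{2}+1}{n}\big)\Big).
\end{equation*}
Subtracting the analogous expression for $\hat{\lambda}_n$, the common cubic-and-higher distortions of the likelihood --- which carry the delicate part of any Bernstein--von Mises remainder --- are held entirely inside $q$ and \emph{cancel}, leaving
\begin{equation*}
\pi_{\hat{\lambda}_n}(u\mid y) - \pi_\lambda(u\mid y) = \tfrac{1}{\sqrt{n}}\, q(u)\,\Delta_{\theta_0}(\hat{\lambda}_n,\lambda)^{t}\big(u - E_q[u]\big) + q(u)\,O\big(\tfrac{\|u\|^{2}+1}{n}\big).
\end{equation*}
This cancellation is the conceptual heart of the argument: it is why the identity holds at order $o(n^{-1/2})$, even though a single posterior approximates its Gaussian limit only at order $n^{-1/2}$, so that a naive term-by-term Gaussian substitution would be too crude.

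It then remains to integrate absolute values. The $O((\|u\|^2+1)/n)$ term contributes $O(n^{-1})=o(n^{-1/2})$ after integrating against $q$, whose second moment is $O(1)$. For the leading term I would invoke the asymptotic normality of $q$, which follows from Condition \ref{assumpt:gen_merg} ($q$ converges to $\mathcal N(I_0^{-1}\nabla\ell_n(\theta_0)/\sqrt{n},\,I_0^{-1})$, with convergence of first absolute moments of linear functionals), so that, using that the mean absolute deviation of a centered Gaussian of variance $v$ equals $\sqrt{2v/\pi}$,
\begin{equation*}
E_q\big[\,|\Delta_{\theta_0}(\hat{\lambda}_n,\lambda)^{t}(u - E_q[u])|\,\big] = \sqrt{\tfrac{2}{\pi}\,\Delta_{\theta_0}(\hat{\lambda}_n,\lambda)^{t}I_0^{-1}\Delta_{\theta_0}(\hat{\lambda}_n,\lambda)}\,(1+o(1)).
\end{equation*}
Since this functional is only needed to $o(1)$ relative accuracy, weak convergence of $q$ (with uniform integrability of the linear functional, factoring $\Delta_{\theta_0}$ through its direction, bounded on the good set by continuity in Condition \ref{assumpt:prior}) suffices and no sharp total-variation rate for $q$ is required. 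Dividing by $\sqrt{n}$ produces exactly \eqref{eq:quadratic}. The particular case is then immediate: by Theorem \ref{theo:MMLE}, $\hat{\lambda}_n\to\lambda^*$ in probability, and the joint continuity of $\nabla\psi$ in $(\theta,\lambda)$ from Condition \ref{assumpt:prior} gives $\Delta_{\theta_0}(\hat{\lambda}_n,\lambda^*)=o_{P_0}(1)$, whence the leading term is $o(n^{-1/2})$.

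The main obstacle will be making the term-by-term expansion and the likelihood cancellation rigorous \emph{uniformly} over the whole localized region, including the moderate-deviation zone $M_n\leq \|u\|\leq \sqrt{n}\,\epsilon$ where $\nabla\psi(\theta_0,\lambda)^{t}u/\sqrt{n}$ is no longer small and the first-order expansion of the prior exponential must be replaced by a direct bound absorbed by the Gaussian decay of $q$; a secondary difficulty is that, since $\hat{\lambda}_n$ is random, every localization and concentration estimate (Conditions \ref{cond:MLE:reg} and \ref{cond:prior}) must be deployed uniformly over the relevant range of $\lambda$ rather than pointwise.
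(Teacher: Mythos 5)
Your proposal is correct, and although it terminates at the same two pillars as the paper's own proof --- the tail mass made $o_{P_0}(n^{-1})$ via the $n^{-H}$ bound with $H>d/2+1$, and the Gaussian mean-absolute-deviation identity $E|Z|=\sqrt{2v/\pi}$ applied to the linear functional $\Delta_{\theta_0}(\hat\lambda_n,\lambda)^t u$ --- the middle of your argument is organized genuinely differently. The paper (Section C.5 of the supplement) centers at the MLE $\hat\theta_n$ and performs a full second-order Laplace expansion of \emph{each} posterior separately: it Taylor-expands the log-likelihood through fourth-order terms (with fifth-derivative control from Condition \ref{assumpt:gen_merg}) and the log-prior through second order, expands both the numerator $N_n(h)$ and the normalizer $D_n$, and obtains $\hat\pi_{\lambda}^{(n)}(h)=\phi_n(h)\{1+n^{-1/2}H_{1,n}(h,\lambda)+n^{-1}H_{2,n}(h,\lambda)+r_n(h,\lambda)\}$; the cancellation you emphasize then occurs term-by-term, because $H_{1,n}(h,\lambda)=\hat{\psi}_j(\lambda)h_j+\hat{\mu}_{jrs}h_jh_rh_s/6$ has a $\lambda$-free cubic part. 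You instead never expand the likelihood at all: you factor out the common localized likelihood $q$, expand only the prior tilt $e^{g(u)}/E_q[e^{g}]$, and make the cancellation structural, so that weak convergence of $q$ to its Gaussian limit with convergence of first absolute moments suffices, and no quantitative Edgeworth-type expansion is needed. Your route is more economical and in principle requires less smoothness; what the paper's heavier machinery buys is the explicit $1/n$-order term $H_{2,n}$ and a quantified remainder, which the authors reuse to extend the proposition to other estimators satisfying $P_0^{(n)}(\overline{m}_{\hat{\lambda}_n}^{(n)}(Y;\Theta_n^c)<Cn^{-H})=1+o(1)$ and in the predictive-merging proof. Two small points of care in your sketch: (i) the convergence $\hat\lambda_n\to\lambda^*$ is obtained in the paper not by citing Theorem \ref{theo:MMLE} directly but by verifying the hypotheses of Proposition \ref{cor:MMLE_KL} of the supplement, deriving the requisite bounds on $K_n(\theta)$ and $V_{n,p}(\theta)$ from Condition \ref{assumpt:gen_merg}(a), so you should route your appeal through that verification; (ii) for the tail bound at the random $\hat\lambda_n$, besides the uniform-in-$\lambda$ partition control you invoke, the paper exploits the maximizing property of the MMLE, namely $\overline{m}_{\hat{\lambda}_n}^{(n)}(Y;\hat{U}_n)\geq \overline{m}_{\lambda}^{(n)}(Y;\hat{U}_n)-(1+C)n^{-H}$, a clean shortcut for lower-bounding the data-dependent normalizer. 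Both are routine within your framework, and your treatment of the moderate-deviation zone $M_n\leq\|u\|\leq\epsilon\sqrt{n}$ matches what the rigorous argument actually requires.
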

The proof is presented in Section C.5 of the Supplement. It is based on a higher-order expansion of posterior densities, which remains valid for the EB posterior. From the proof it is clear that Proposition \ref{prop:TVpost_gen} extends to other types of estimators $\hat \lambda_n$, as soon as $\hatl_n $  converges in probability to $ \lambda^*(\theta_0)$. Interestingly, this is the case for the {\em proxy-MMLE} in Corollary \ref{cor:proxy}. We use such an extended result in Section \ref{sec:examples}, to treat variants of MMLE. 

Proposition \ref{prop:TVpost_gen} allows us to see that EBIB is indeed targeting the oracle posterior $\pi_{\lambda^*}(\cdot|y)$,  and that for all $\lambda $ such that $\Delta_{\theta_0}(\lambda^*, \lambda)>0$, the {Bayes} posterior will be much further apart from $\pi_{\lambda^*}(\cdot|y)$ or from $\pi_{\hat\lambda_n}(\cdot|y)$ since, for some $0<c_1<c_2<\infty$, we then have 
\begin{equation}\label{eq: post_comp2_gen}
		c_1 n^{-1/2} \leq 
		\Vert \pi_{\hat{\lambda}_n}(\cdot|y)- \pi_{\lambda}(\cdot|y)\Vert_1
		\leq c_2 n^{-1/2}, 
		\end{equation}
with $y$ in a set of $P_{\theta_0}^{(n)}$-probability tending to $1$. In practice, $\theta_0$ is unknown and so is the oracle value $\lambda^* = \lambda^*(\theta_0)$. Hence finding a value $\lambda$ such that, for all $\theta$, $\Delta_{\theta}( \lambda, \lambda^*(\theta)) = 0$ is often not possible a priori. In this regard EBIB with the MMLE is a way to ensure it.

 Proposition \ref{prop:TVpost_gen} is established for proper prior densities, using Conditions B.5--B.7 to guarantee that, for a neighbourhood $\hat{U}_n$ of the MLE, the ratio $\overline{m}_\lambda^{(n)}(Y;\hat{U}_n^c)/ \overline{m}_\lambda^{(n)}(Y;\hat{U}_n)$ is asymptotically negligible, where, for $U \subset \Theta$, $\overline{m}_\lambda^{(n)}(Y;U):= \int_{U}p_{\theta}^{(n)}(Y)/p_{\theta_0}^{(n)}(Y)\diff \Pi_\lambda(\theta)$. In general, as soon as the marginal likelihood is well defined, 
$$
\overline{m}_\lambda^{(n)}(Y;\hat{U}_n^c)= 		\frac{\Pi_\lambda(\hat{U}_n^c|Y)}{1-\Pi_\lambda(\hat{U}_n^c|Y)}
\overline{m}_\lambda^{(n)}(Y;\hat{U}_n).
$$
Thus, extensions to improper priors can be  easily obtained if the posterior mass outside $\hat{U}_n$ decays to $0$ sufficiently fast, as e.g. in linear regression with $g$-priors on the regression coefficients and improper prior on the noise variance  - illustrated in Section D.4.2 of the Supplement.	 To deal with data-dependent priors when the set $\Theta_n$ over which the likelihood is well-behaved   in  Condition B.5  differs from $\Theta$, we further require that for some $C>0$ the event  $P_{\theta_0}^{(n)}( \overline{m}_{\hat{\lambda}_n}^{(n)}(Y;\Theta_n^c)<Cn^{-H}) = 1 + o(1)$. We close with the following implications for Bayesian credible regions.  
\begin{rem}\label{rem:crediblelevel} An implication of Proposition \ref{prop:TVpost_gen} is that an EB credible region $C_n$ of level $1-\alpha$ for the parameter $\theta$, i.e. satisfying $\Pi_{\hatl_n}(C_n|Y)=1-\alpha$, has approximately the same credible level under the posterior distribution stemming from $\lambda^*\equiv \lambda^*(\theta_0)$, since Proposition \ref{prop:TVpost_gen} guarantees that $ \Pi_{\lambda^*}(C_n|Y)-(1-\alpha) = o_{P_0}(1/\sqrt{n})$.  At the same time, for any $\lambda $ such that $\Delta_{\theta_0}(\lambda, \lambda^*)\neq 0$, the rescaled discrepancy $(\Pi_{\lambda}(C_n|Y)-(1-\alpha))n^{1/2} $ remains positive with probability tending to $1$ (provided that $\alpha$ is small enough). Thus, informally, the EB credible region $C_n$ provides a more accurate approximation of an oracle Bayes credible region than of a non-oracle one.
\end{rem}

\subsection{Higher order analysis of predictive merging}\label{sec:nondegpred}

In many applications, prediction of future observations is the central goal, beyond parameter estimation.  Here we establish an analogue of Proposition \ref{prop:TVpost_gen} for merging of predictive distributions, considering observations of the form $Y=(Y_1, \ldots, Y_n)$, with $Y_i$'s all taking values in the same Euclidean subspace $\mathbb{Y}_0$, endowed with measure $\mu$.

In the Bayesian setting, prediction of a future observation $Y_{n+1}$ given $y=(y_1, \ldots, y_n)$, is solved by  the predictive distribution $ q_\lambda(y_{n+1} \mid y_{1:n}) = \int_{\Theta} p_\theta(y_{n+1} \mid y_{1:n}) \diff \Pi_\lambda(\theta|y_{1:n}), $ where $p_\theta(y_{n+1}\mid y_{1:n})$ is meant as the conditional density of $Y_{n+1}$ given the data {\em and} $\theta$. We focus on the case where the $Y_i$ are conditionally independent given $\theta$ but not necessarily identically distributed (that includes, in particular, fixed design regression); that is, $p_\theta^{(n)}(y_{1:n})= \prod_{i=1}^n p_{\theta, i}(y_i)$ for any $n\geq 1$. The EB predictive density using the MMLE $\hatl_n$ is $q_{\hatl_n}(y_{n+1} \mid y_{1:n})$,  and its merging properties are studied under the true density $p_0^{(n)}=\prod_{i=1}^{n}p_{\theta_0,i}$. As soon as the Bayesian and the EB posterior distributions  on $\theta$ are consistent, $q_{\hat\lambda_n}(\,\cdot\,|Y_{1:n})$ merges in $L_1$ with $q_{\lambda}(\,\cdot\,|Y_{1:n})$ for any $\lambda$, thus at first asymptotic order, no distinction between the EB and any Bayesian predictive distribution is possible. Note that this can be the case even when $\sup_\lambda \pi_\lambda(\theta_0)=\infty$.  Again, higher order refinements of merging results are only possible (in general) for the regular case where $\sup_\lambda\pi_\lambda(\theta_0)<\infty$, on which we focus.  Because, in a frequentist perspective, the $Y_i$ are independent, regularity conditions on the log likelihood, such as Condition B.5 in Section B.3 of the Supplement, can be written in a simpler form. Moreover, for merging of the predictive distributions, we need weaker assumption on the smoothness of the log likelihood (Condition B.8).  
\begin{prop}\label{prop: pred}
	Assume  that Conditions B.4(c)--(d) and B.6--B.8 in Sections B.2--B.3 of the Supplement  are verified. Then for any $\lambda \in \Lambda$, as $n \to \infty$
	\begin{equation}\label{eq:quadratic_pred}
	\Vert q_{\lambda}(\cdot | Y_{1:n})-			q_{\hat{\lambda}_n}(\cdot | Y_{1:n})\Vert_1 = \frac{1}{n}
	\int_\mathbb{Y}
	\left| 
	\Delta_{\theta_0}(\lambda,\hat{\lambda}_n)^T I_0^{-1}\nabla_\theta p_{\theta,n+1}(y_{n+1})\right|_{\theta=\theta_0} \diff y_{n+1}+o_{P_0}\left(\frac{1}{n}\right)
	\end{equation}
	and, in particular, $\Vert q_{\lambda^{*}}(\cdot\mid Y_{1:n})-q_{\hat{\lambda}_n}(\cdot \mid Y_{1:n})\Vert_1=o_{P_0}(n^{-1})$.
	\end{prop}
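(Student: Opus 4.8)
The plan is to reduce the predictive $L_1$ distance to a difference of two posterior expectations of the smooth map $\theta \mapsto p_{\theta,n+1}(y_{n+1})$ and then Taylor-expand this map around $\theta_0$. Writing $q_\lambda(y_{n+1}\mid Y_{1:n}) = \int_\Theta p_{\theta,n+1}(y_{n+1})\,\diff\Pi_\lambda(\theta\mid Y_{1:n})$, the difference of the two predictive densities is $\int_\Theta p_{\theta,n+1}(y_{n+1})\,[\diff\Pi_\lambda - \diff\Pi_{\hat\lambda_n}](\theta\mid Y_{1:n})$. First I would discard the contribution of $U_\epsilon^c$: by Condition \ref{cond:MLE:reg}, together with the tail control on the $\hat\lambda_n$-posterior noted after Proposition \ref{prop:TVpost_gen}, the posterior masses $\Pi_\lambda(U_\epsilon^c\mid Y_{1:n})$ and $\Pi_{\hat\lambda_n}(U_\epsilon^c\mid Y_{1:n})$ are $O_{P_0}(n^{-H})$ with $H>d/2+1$; since each $p_{\theta,n+1}$ integrates to one in $y_{n+1}$, Fubini turns the corresponding $L_1$ contribution into $O_{P_0}(n^{-H})=o_{P_0}(n^{-1})$, so it suffices to expand on $U_\epsilon$.

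On $U_\epsilon$ the map is twice continuously differentiable (Condition \ref{cond:pred}), so Taylor's theorem gives, for each $y_{n+1}$, the expansion $q_\lambda(y_{n+1}\mid Y_{1:n}) = p_{\theta_0,n+1}(y_{n+1}) + \nabla_\theta p_{\theta_0,n+1}(y_{n+1})^t(\bar\theta_\lambda-\theta_0) + R_\lambda(y_{n+1})$, where $\bar\theta_\lambda := E_{\Pi_\lambda(\cdot\mid Y_{1:n})}[\theta]$ and $R_\lambda$ is the quadratic remainder; the $p_{\theta_0}$ terms cancel in the difference. The key input is the posterior-mean expansion, which follows from the same Laplace/Bernstein--von Mises machinery underlying Proposition \ref{prop:TVpost_gen}: under Condition \ref{assumpt:prior} and the local quadratic structure of $\ell_n$ implied by Condition \ref{cond:pred}, the rescaled posterior of $\sqrt n(\theta-\hat\theta_n)$ is asymptotically $\Norm(I_0^{-1}\nabla\psi(\theta_0,\lambda)/\sqrt n,\,I_0^{-1})$, whence $\bar\theta_\lambda-\bar\theta_{\hat\lambda_n} = n^{-1}I_0^{-1}\Delta_{\theta_0}(\lambda,\hat\lambda_n)+o_{P_0}(n^{-1})$, the replacement of $\hat\theta_n$ by $\theta_0$ inside $\nabla\psi$ costing only $O_{P_0}(n^{-3/2})$ by continuity of the second $\theta$-derivatives of $\psi$. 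Substituting, the first-order term of the predictive difference is $n^{-1}\nabla_\theta p_{\theta_0,n+1}(y_{n+1})^t I_0^{-1}\Delta_{\theta_0}(\lambda,\hat\lambda_n)$ up to a $y_{n+1}$-independent vector of size $o_{P_0}(n^{-1})$.

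Integrating the absolute value of this first-order term over $y_{n+1}$ produces exactly the claimed leading quantity, after using the symmetry of $I_0^{-1}$, while the error contributes only $o_{P_0}(n^{-1})$ because $\int_{\mathbb Y_0}\|\nabla_\theta p_{\theta_0,n+1}\|\,\diff\mu<\infty$ by Cauchy--Schwarz and the second bound in Condition \ref{cond:pred}(b). The heart of the argument is to show $\int_{\mathbb Y_0}|R_\lambda - R_{\hat\lambda_n}|\,\diff\mu = o_{P_0}(n^{-1})$. Splitting $\nabla^2 p_{\tilde\theta,n+1} = \nabla^2 p_{\theta_0,n+1} + (\nabla^2 p_{\tilde\theta,n+1}-\nabla^2 p_{\theta_0,n+1})$, the first piece produces $\tfrac12\,\mathrm{tr}\{\nabla^2 p_{\theta_0,n+1}(y_{n+1})(S_\lambda - S_{\hat\lambda_n})\}$ with $S_\lambda := E_{\Pi_\lambda}[(\theta-\theta_0)(\theta-\theta_0)^t]$; because both posteriors share the leading covariance $I_0^{-1}/n$ and their means agree up to $O_{P_0}(n^{-1})$ while lying within $O_{P_0}(n^{-1/2})$ of $\theta_0$, one gets $S_\lambda - S_{\hat\lambda_n} = o_{P_0}(n^{-1})$, and $\int_{\mathbb Y_0}\|\nabla^2 p_{\theta_0,n+1}\|\,\diff\mu<\infty$ (again Condition \ref{cond:pred}(a)--(b)) turns this into $o_{P_0}(n^{-1})$. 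The second piece is handled by dominated convergence: its $y_{n+1}$-integral is dominated by $F_2$ with $\int_{\mathbb Y_0} F_2\,\diff\mu<\infty$, while posterior concentration forces $\tilde\theta\to\theta_0$, so $\int_{\mathbb Y_0}\|\nabla^2 p_{\tilde\theta,n+1}-\nabla^2 p_{\theta_0,n+1}\|\,\diff\mu\to0$ on the concentration event and the remaining posterior mass is super-polynomially small; multiplied by $E_{\Pi_\lambda}\|\theta-\theta_0\|^2=O_{P_0}(n^{-1})$ this is again $o_{P_0}(n^{-1})$ for each posterior.

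For the final assertion I would take $\lambda=\lambda^*$ and combine the main expansion with $\hat\lambda_n\to\lambda^*$ in probability (Theorem \ref{theo:MMLE}) and continuity of $\lambda\mapsto\nabla_\theta\psi(\theta_0,\lambda)$ (Condition \ref{assumpt:prior}), giving $\Delta_{\theta_0}(\lambda^*,\hat\lambda_n)=o_{P_0}(1)$; since the factor $\int_{\mathbb Y_0}\|\nabla_\theta p_{\theta_0,n+1}\|\,\diff\mu$ is bounded, the leading term is $o_{P_0}(n^{-1})$. The main obstacle is precisely the uniform-in-$y_{n+1}$ control of the quadratic remainder: pointwise the expansion is routine, but to pass to the $L_1$ norm I must show the difference of the two posteriors' second moments integrates against $\nabla^2 p_{\theta_0,n+1}$ to $o(n^{-1})$ rather than merely $O(n^{-1})$. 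This cancellation rests on the two posteriors sharing the same leading Gaussian covariance and is the predictive analogue of the cancellation that left the clean $n^{-1/2}$ term in Proposition \ref{prop:TVpost_gen}; the integrability envelopes in Condition \ref{cond:pred} are exactly what make the dominated-convergence step legitimate.
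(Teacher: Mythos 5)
Your proof is correct and follows essentially the same route as the paper's: both localize the predictive difference to a neighbourhood where Condition \ref{cond:pred} applies, Taylor-expand $p_{\theta,n+1}$ to second order so that the leading term reduces to the difference of posterior first moments, $\bar\theta_\lambda-\bar\theta_{\hat\lambda_n}=n^{-1}I_0^{-1}\Delta_{\theta_0}(\lambda,\hat\lambda_n)+o_{P_0}(n^{-1})$ (where all $\lambda$-independent likelihood corrections cancel), and dominate the Hessian remainder by the envelope $F_2$ together with the cancellation of the shared Gaussian structure --- the paper phrases this last step as $\int_{V_n}\|h\|^2\,|\hat\pi^{(n)}_{\lambda}(h)-\hat\pi^{(n)}_{\hat\lambda_n}(h)|\,\diff h=O_{P_0}(n^{-1/2})$, you as $S_\lambda-S_{\hat\lambda_n}=o_{P_0}(n^{-1})$, which are equivalent up to your choice of expansion center $\theta_0$ versus the paper's $\hat\theta_n$. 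The only slip is cosmetic: the tail posterior masses obtained from Condition \ref{cond:MLE:reg} are $O_{P_0}(n^{d/2-H})$ rather than $O_{P_0}(n^{-H})$, which is still $o_{P_0}(n^{-1})$ precisely because $H>d/2+1$, so your conclusion stands.
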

The proof is provided in Section C.6 of the Supplement. Note that in the simpler case of i.i.d. data, as soon as the function $y \mapsto \nabla p_{\theta_0}(y)$ is integrable and its components are continuous and linearly independent functions, then 
$$
\int_\mathbb{Y}
	\left| 
	\Delta_{\theta_0}(\lambda,\lambda^*)^T I_0^{-1}\nabla  p_{\theta_0}(y) \right| \diff \mu(y) > 0 \quad \Leftrightarrow \quad \Delta_{\theta_0}(\lambda,\lambda^*) \neq 0
	$$
and higher order predictive merging is equivalent to either $\Delta_{\theta_0}(\lambda,\lambda^*)=0$ for fixed $\lambda$, or $\Delta_{\theta_0}(\lambda,\lambda^*)=o(1)$ for any sequence $\lambda \equiv \lambda_n$. 

Finally, implications as in Remark \ref{rem:crediblelevel} hold for predictive intervals, too. If $R_n$ is an EB predictive region for $Y_{n+1}$ given $Y_{1:n}$, such that $\int_{R_n}  q_{\hatl_n}(y \mid Y_{1:n}) \, \diff\mu(y)=(1-\alpha)$, then  $n \; (\int_{R_n} q_{\lambda^*}(y \mid Y_{1:n}) \, \diff y - (1-\alpha))=o_{P_0}(1),$ while the discrepancy in predictive levels obtained with a non-oracle choice $\lambda \neq \lambda^*(\theta_0)$ would generally be of larger magnitude.

\subsection{Examples and extensions}\label{sec:examples}
We here illustrate merging, studied in its general aspects in the previous sections.  In fact, each of the following examples extends the reach of our theoretical results: in Example \ref{ex:finitegauss} (finite mixtures) the model is weakly identifiable; in Example \ref{ex: lasso}, the prior lacks smoothness at zero (exemplifying a typical behavior with hierarchical shrinkage priors); furthermore, in both examples, also merging of the {\em marginal} posterior distributions is formally inspected.  An additional example, dealing with improper priors and predictive merging in random design regression, is provided in Subsection D.4 of the Supplement.
\begin{ex}[{\em Gaussian finite mixtures}] \label{ex:finitegauss}  
Consider the Gaussian scale-location mixture model  $ Y_i |w, \mu_{1:K},v_{1:K}  \iidsim  \sum_{j=1}^K w_j \, \Norm(\mu_j, v_j)$, with a popular Normal Inverse-Gamma prior on the emission parameters  
\begin{equation} \label{eq:original_prior_dens}
 \mu_j \mid v_j \indsim \mathcal{N}(\xi, v_j/\tau),  \quad 
			v_j \iidsim \mathcal{IG}(\omega/2, \psi/2), \quad  w=(w_1,\ldots,w_K) \sim \mathcal{D}(1,...,1).
\end{equation}
as e.g. in  \cite{raftery1996}, Section 5.1. Let $\lambda=(\xi, \tau, \psi)$, while $\omega$ is a fixed hyperparameter. Data-dependent choices of the hyperparameters have been proposed {and commonly allowed in package implementations \citep[e.g., in the \texttt{R} package \texttt{AntMAN}, see][]{AntMAN2025}}, with the aim of specifying priors which are as weakly informative as possible \citep[see also][]{r+g97}; herein we consider the MMLE.
Differently from overfitted mixtures in Section \ref{sec:mixtures}, we here assume that the true number of components is known and equal to $K$, i.e. the true mixture weights are all positive and the true pairs of parameters $(\mu_{0,j}, v_{0,j})_{1\leq j\leq K}$ are distinct. In this case, for  each $\theta_0$ giving the true density $p_0^{(n)}=p_{\theta_0}^{(n)}$, there are  $K!-1$ permutations that  induce  the same data-generating density. As argued in Section D.5 of  the Supplement, this weak form of non-identifiability does not hamper the application of our results, provided that the true means $\mu_{0,j}$ are not all equal. If so, there still exists a unique oracle choice of the hyperparameter $\lambda^*(\theta_0)=(\xi^*, \tau^*, \psi^*)$, given by
			\begin{equation*}
			\begin{split}
			\xi^*= \sum_{j=1}^K \frac{1/v_{0,j}}{\sum_{l=1}^K 1/v_{0,j}}\mu_{0,j},
			\quad
			\tau^*=K \left\lbrace
			\sum_{j=1}^K
			\frac{(\mu_{0,j}-\xi^*)^2}{v_{0,j}}
			\right\rbrace^{-1},
			\quad
			\psi^*=K \omega 
			\left\lbrace
			\sum_{j=1}^K 1/v_{0,j}
			\right\rbrace^{-1},
			\end{split}
			\end{equation*}
and, under suitable assumptions on the set $\Lambda^{(n)}$ over which the MMLE  is computed, results on posterior and predictive merging, analogous to those in Proposition \ref{prop:TVpost_gen}--\ref{prop: pred}, hold. 

Again, exact evaluation of the MMLE can be lengthy, and EBIB via proxy-MMLEs can be more viable. 
{\scriptsize 
\begin{table}[h]
\centering
\begin{tabular}{c | cccc | cccc}
\hline
 & & {\small Mixture model 1} & {\small($K=3$) } &   & & {\small Mixture model 2} &  {\small($K=4$)} & \\
\hline 
{\small $n$ } & {\small100 } & {\small 600 } & {\small1100 } & {\small2100 } &  {\small100 } & {\small600} & {\small1100} & {\small2100}\\
 \hline
  {\small$\lambda_{1}$} 
  & {\small4.864}
  & {\small3.376}
  & {\small3.719}
 & {\small3.772} 
 & {\small5.639} & {\small 5.900} & {\small6.084} & {\small6.034}\\
  {\small$\lambda_2$}   & {\small0.061}
  & {\small0.125}
  & {\small0.104}
 &  {\small0.104}
 &  {\small0.012} & {\small0.014} & {\small0.014} & {\small0.014} \\
 {\small ${\lambda}_{3}$}   & {\small0.632}
  & {\small0.970}
  &{\small0.963}
 &{\small0.997}
 &{\small0.237} &{\small0.304} &{\small0.301} &{\small0.301} \\
 \hline
\end{tabular}
\caption{{\small Proxy-MMLE of $\lambda_j$, $j=1,2,3$, for increasing sample size, for the two mixture models in Example \ref{ex:finitegauss}. The oracle values $(\lambda_1^*, \lambda_2^*, \lambda_3^*)$ are, respectively, $(3.67, 0.12, 1)$ and $(6, 0.015, 0.3)$.}}
\label{fig:tableMMLE_mix}
\end{table}
}
\begin{figure}[h!]
    \centering
       \includegraphics[scale=1]{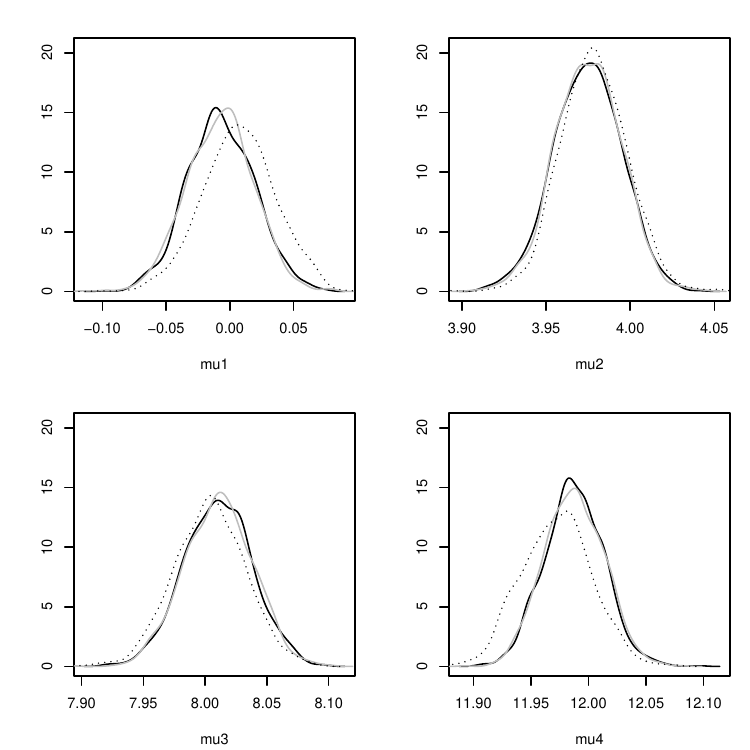}
    \caption{\small Marginal posterior densities {($n=2100$)} of the location parameters $\mu_1, \ldots,\mu_{4}$ for the Gaussian scale-location mixture model, with prior \eqref{eq:original_prior_dens}: EB marginal posterior densities with proxy-MMLE (black solid), Bayes with oracle hyperparameter $\lambda^*(\theta_0)$ (gray solid),  Bayes with $\lambda=(5,1,1)$ (dotted).} 
    \label{fig:regmix}
\end{figure}
Table \ref{fig:tableMMLE_mix} illustrates the convergence behavior of the proxy-MMLE to the oracle value $\lambda^*(\theta_0)$. The data are simulated from two mixture models:  (1) the mixture density $0.3 \mathcal{N}(0,1)+0.3 \mathcal{N}(4,1)+0.4 \mathcal{N}(7,1)$,  for which the oracle choice is $\lambda^*(\theta_0)=(3.\overline{6}, 0.12, 1)$, and (2) the mixture $0.2 \mathcal{N}(0,0.3)+0.4 \mathcal{N}(4,0.3)+0.2 \mathcal{N}(8,0.3)+0.2\mathcal{N}(12, 0.3)$, for which $\lambda^*(\theta_0)=(6,0.015, 0.3)$. The results reported in Table \ref{fig:tableMMLE_mix} refer to samples of increasing size $n=100, 600, 1100, 2100$. The proxy-MMLE is based on the maximum likelihood estimates of the model's parameters. As expected, at lower signal-to-noise the MLEs of $\theta$ are more accurate, in turn returning a proxy-MMLE closer to the oracle.

While our theoretical merging results apply to the joint posterior densities,  Figure \ref{fig:regmix} shows the {\em marginal} posterior densities of the location parameters of the four-component mixture model (2)  (in ascending order of magnitude), for $n=2100$ and three different choices of prior hyperparameters:   the EB marginal posterior densities with the proxy-MMLE, the oracle-Bayes, and the Bayes marginal posterior densities with fixed hyperparameter $\lambda=(5,1,1)$. In all cases, except for  $\mu_2$, the EB and the oracle-Bayes posterior densities are significantly closer to each other than to the Bayes posterior density with the given choice of $\lambda$. This suggests faster merging of the {\em marginal} EB and oracle posteriors.
\end{ex}

\begin{ex}[{\em Bayesian LASSO}] \label{ex: lasso}
Consider a linear regression model with no intercept 
$Y= X\beta + \varepsilon$, where $\varepsilon_i \mid \sigma^2 \iidsim \mathcal{N}(0,\sigma^2)$ and the design matrix $X$  satisfies $X^TX/n \to V$, with $V$  positive definite. Interest is in sparse models where some, but not all, true coefficients $\beta_{0,j}$ can be zero. A similar regression setting was considered in Example \ref{ex:mixedreg}, with independent $N(0, \tau_j^2)$ priors on the coefficients $\beta_j$ and an EB choice of the $\tau_j^2$; while a widely used Bayesian approach is based on continuous shrinkage priors with hierarchical construction 
\begin{align} \label{eq:shrinkage priors class} 
\beta_j \mid  \lambda, \tau_1^2, \ldots, \tau_d^2, \sigma^2  \indsim \Norm_d(0, v(\lambda,\sigma^2) \tau_j^2) \, \quad \mbox{with } \quad 
\tau_j^2  \mid \lambda  \iidsim h_\lambda, \quad j=1, \ldots,d  
\end{align}
where the function $v(\lambda,\sigma^2)$  controls global shrinkage and $\sigma^2\sim \pi$. As a prototype, we here consider the Laplace prior on $\beta$ proposed by \cite{parkCasella2008} for Bayesian LASSO 
\begin{equation}\label{eq:bayeslasso}
	\pi_\lambda(\beta \mid \sigma^2)= \prod_{j=1}^d \frac{\lambda}{2\sigma} e^{-\lambda |\beta_j|/\sigma},  
\end{equation} 
whose hierarchical representation has $v(\lambda,\sigma^2)=\sigma^2$ and $h_\lambda(\tau^2)= (\lambda^2/2)\exp(-\lambda^2\tau^2/2)$, an Exponential($\lambda^2/2$). Again, the choice of the hyperparameter $\lambda$, controlling shrinkage, is delicate and in an EBIB approach it could be selected by MMLE \citep{parkCasella2008}. The oracle value, that lies in the interior of $\Lambda$, is $\lambda^*(\beta_0, \sigma_0) = d \, \sigma_0/\sum_{j=1}^d |\beta_{0,j}|$  (the function $\lambda \mapsto \log \pi_\lambda(\beta_0|\sigma_0^2)$ is strictly concave and has a unique maximizer).  In this context, Proposition \ref{prop:TVpost_gen} is not directly applicable for comparisons,  since the derivatives $(\partial /\partial \beta_j)\log \pi_\lambda(\beta|\sigma_0^2)$ are not defined at $\beta_0$ whenever $\beta_{0,j}=0$. This singularity is a feature of the Laplace prior and more generally of other shrinkage priors, which are designed to push probability mass near zero. Note that for null $\beta_{0,j}$, the direct EB selection of $\tau^2_j$, as in Example \ref{ex:mixedreg}, leads to an EB posterior degenerate at zero; {although} the hierarchical Laplace prior $\pi_\lambda$ smooths this behaviorand we now have $\sup_\lambda \pi_\lambda(\theta_0) < \infty$, the degeneracy at the latent stage still has an effect - the hierarchical prior is not differentiable at those points.   Still,  assuming for simplicity that $X$ consists of orthogonal columns $x_{n,j}$,  $j=1, \ldots, d$, and that $\sigma^2$ is known, we can prove higher order merging for the joint {\em and} the marginal posterior densities of the regression coefficients $\beta_j$ for each $j$. More precisely, as  detailed in Section D.6.2 of the Supplement, the Bayesian marginal posterior density $\pi_{\lambda,j}(\,\cdot\, | Y)$ of $\beta_j$ and the EB marginal posterior density $\pi_{\hatl_n,j}(\,\cdot\,|Y)$ satisfy
\begin{align} \label{eq:shrinkagepriors}
|\hatl_n - \lambda|\frac{L_{n,j}}{\sqrt{n}} \leq 
\Vert \pi_{\hatl_n,j}(\,\cdot    \,|Y)-\pi_{\lambda,j}(\,\cdot     \,|Y) \Vert_1 \leq |\hatl_n - \lambda|\frac{U_{n,j}}{\sqrt{n}}
\end{align}
for some positive random sequences $L_{n,j}\equiv L_{n,j}(\lambda)$ and $U_{n,j}\equiv U_{n,j}(\lambda)$,  which are $O_{P_0}(1)$ and converge in distribution to positive random variables. Since, by arguments similar to those in Example \ref{ex:mixedreg}, Theorem \ref{theo:MMLE} applies  and $\hatl_n = \lambda^*(\theta_0)+o_{P_0}(1)$ (see Sections D.6.1-D.6.2 in the Supplement), we can conclude that faster merging holds {\em marginally} for each regression coefficient, whether its true value is non-null or zero: $\Vert \pi_{\hatl_n,j}(\,\cdot\,|Y)-\pi_{\lambda^*,j}(\,\cdot\,|Y) \Vert_1 =o_{P_0}(1/\sqrt{n})$, for $j=1, \ldots, d$. Furthermore, we can conclude that for any $\lambda>0$ there are positive sequences $L_n$ and $U_n$ (with same properties as above) such that the Bayesian and the EB {\em joint} posterior densities of the vector of regression coefficients satisfy 
\begin{align*} 
|\hatl_n - \lambda|
\frac{L_n}{\sqrt{n}}  \leq \Vert \pi_{\hatl_n}(\,\cdot\,|Y)-\pi_{\lambda}(\,\cdot\,|Y) \Vert_1   \leq
|\hatl_n - \lambda|
\frac{U_n}{\sqrt{n}}.
\end{align*}
 Once more, the distance of smaller asymptotic order is between EB and oracle-Bayes posteriors. This behaviour is illustrated in Figure \ref{fig:plot2}. Synthetic data are generated from a sparse model with $d=15$, a true $\beta$ vector with $11$ zeros and $(\beta_{0,12},\ldots \beta_{0,15})=(0.5, -2, 1, 3)$, and $\sigma^2=1$. The $X_i$ are iid Uniform$(-10,10)$. The prior of $\beta$ is as in \eqref{eq:bayeslasso}, with $\sigma^2 \sim\pi(\sigma) \propto 1/\sigma^2$. The oracle hyperparameter value is $\lambda^*(\theta_0)=2.3077$. 
\begin{figure}[h!]
    \centering
    \includegraphics[scale=1]{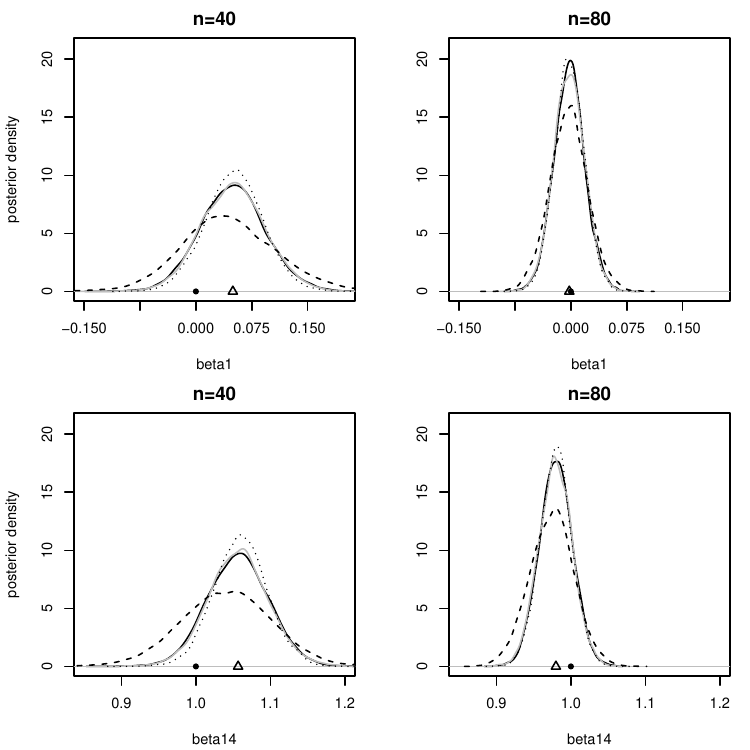}
    \caption{Bayesian LASSO. Posterior densities of $\beta_1$ and $\beta_{14}$: EB with MMLE (black solid), Bayes with oracle $\lambda^*$ (gray solid), Bayes with $\lambda=1$ (dotted) and $\lambda=8$ (dashed). True values $\beta_{0,j}$ are marked as black bullets, EB posterior means as empty triangles. } 
    \label{fig:plot2}
\end{figure}
\noindent Table \ref{fig:tableMMLE} shows the behavior of the MMLE $\hat{\lambda}_n$ for varying  $n$; as expected, it approaches the oracle value as $n$ increases. The MMLE is computed through the EM within Gibbs algorithm as in Park and Casella (2008). In fact, by an application of Corollary \ref{cor:proxy} (see Section D.6.1 in the Supplement) the MMLE $\hatl_n$ can be approximated by $ \hat{\lambda}^{\text{proxy}}_n = \lambda^*(\hat{\theta}_n)=  d \; \hat{\sigma}_n / \sum_{j=1}^d |\hat{\beta_j}_n|$, with $\hat{\theta}_n=(\hat \beta_n, \hat\sigma_n)$ the MLE of $\theta$. For a quick look,  both are reported in Table \ref{fig:tableMMLE}. 
{\scriptsize 
\begin{table}[h]
\centering
\begin{tabular}{c | cccc}
\hline
 {\small$n$} & {\small40} & {\small80} & {\small150} & {\small300}\\
 \hline
{\small MMLE $\hat{\lambda}_n$} & {\small2.503} & {\small2.026} & {\small2.207}  & {\small2.291} \\
 \hline
{\small Proxy-MMLE ${\lambda}^*(\hat{\theta}_n)$}
  & {\small2.679}
  & {\small2.104}
  & {\small2.274}
 & {\small2.336}\\
 \hline
\end{tabular}
\caption{\small MMLE $\hat{\lambda}_n$ approximated by EM withing Gibbs, and proxy-MMLE $\lambda^*(\hat{\theta}_n)$ }
\label{fig:tableMMLE}
\end{table}
}

Figure \ref{fig:plot2} illustrates the behavior of the marginal posterior densities.  Merging effects hold both for null and non-null regression coefficients; we report the results for $\beta_1$ and $\beta_{14}$, whose true values are $\beta_{0,1}=0$ and $\beta_{0,14}=1$.  The EB posterior density with MMLE (solid) is quite close to the oracle Bayes (solid, gray); for a comparison, we also plot the Bayesian posterior densities from a prior with $\lambda=8$ (dashed curve) and $\lambda=1$, fairly close to the oracle $\lambda^*(\beta_0,\sigma_0)=2.3$ (dotted).  For $n=40$ (left column), the differences are quite marked. For increasing $n$, differences are mitigated, as theoretically suggested by first order merging. As typical for higher-order asymptotic results, the differences would still be marked for large sample size if the model is higher-dimensional or more complex. 

Similar merging behavior can be envisaged for other continuous shrinkage priors of the form (\ref{eq:shrinkage priors class}), which as well may violate our smoothness assumptions (with special care needed in cases of asymptotes at sparse $\beta_{0}$'s for all $\lambda$, as for the  horseshoe), but a comprehensive scrutiny of the large variety of shrinkage priors goes beyond the scope of this paper.
\end{ex}

\section{Final remarks} \label{sec:final}

\cite{efron19} writes that \lq \lq Empirical Bayes has suffered of a philosophical identity problem", not being \lq \lq firmly attached to either frequentism or Bayesianism". Empirical Bayes {\em in Bayes} is  even more questionable. It is however popularly used, and in this paper we have dug into common beliefs to provide a cleaner interpretation. In non-degenerate parametric settings, and having chosen the family of priors, a Bayesian may regard EBIB as a close approximation of the posterior from a \lq \lq most informed" (oracle) prior, with computational advantages.  A frequentist can recognize it as a problem of maximum likelihood estimation in an unexplored case of model misspecification, for which we proved new results. 

Our findings hold for quite general settings;  still, more remains to be explored. Merging results for the EB marginal posterior distributions of regular components in partially degenerate cases are still unknown. We note that, in these cases, the Bayesian and the oracle-Bayes posterior laws will generally have different Bernstein-von Mises approximations so that merging would fail, even at first order; moreover, in (partially) degenerate cases the EB procedure is conjectured to be super-efficient. This can be future work.

We finally stress that, in the perspective here taken, the class of priors is considered as given. We do not discuss the more fundamental problem of choosing the family of priors. As seen, while first order asymptotics flattens the distinctions among different priors, even in comparing  EB and \lq fully Bayes' posterior distributions, our higher order results distinguish the EB posterior as a closer approximation of the Bayes-oracle posterior {\em within the chosen class of priors} $\{\pi_\lambda$, $\lambda \in \Lambda$\}. Still, a hierarchical (fully-Bayes) prior $\pi_H=\int \pi_\lambda(\theta) \diff H(\theta)$ expresses different and more structured information. Our finer results formally show (and confirm) that a comparison with this EB approach goes deeply beyond convergence to the most informed choice within a {\em given} class of priors.

\vspace{1em}

\paragraph{Supplementary Material.} 
The supplementary material document contains additional theoretical results, all the proofs of the main asymptotic results presented in this article and  details for the examples.

\paragraph{Acknowledgments.} 
The authors are sincerely grateful to the reviewers for many constructive and helpful comments.  The project leading to this work has received funding from the European Research Council (ERC) under the European Union’s Horizon 2020 research and innovation programme (grant agreement No. 834175). S.P. is supported by the European Union - Next Generation EU Funds, PRIN 2022 (2022CLTYP4).

\bibliographystyle{chicago}
\bibliography{ref}

\end{document}